%%***************************************************************************************************************************************************
%% ARTICLE
%%***************************************************************************************************************************************************

\documentclass{amsart}
\usepackage[latin1]{inputenc}
\usepackage{amssymb}
\usepackage{amsfonts}
\usepackage{enumerate}
\usepackage{enumitem}
\usepackage{indentfirst}
\usepackage{amsmath}
\usepackage{amsthm}
\usepackage[all]{xy}
\usepackage{mathrsfs}

%% NEW COMMANDS *************************************************************************************************************************************

\newcommand{\pups}[3][-1pt]{{}^{#2}\hspace{#1}#3}

\newcommand{\FF}{\mathbb{F}}
\newcommand{\GG}{\mathbb{G}}
\newcommand{\NN}{\mathbb{N}}

\newcommand{\CC}{\mathbb{C}}
\newcommand{\PP}{\mathbb{P}}
\newcommand{\Ucal}{\mathcal{U}}
\newcommand{\Ocal}{\mathcal{O}}
\newcommand{\restr}[2]{\left.{#1}\right|_{#2}}

\DeclareMathOperator{\Antisym}{Antisym}
\DeclareMathOperator{\Aut}{Aut}
\DeclareMathOperator{\PGL}{\PP GL}

\DeclareMathOperator{\Sp}{Sp}
\DeclareMathOperator{\Mat}{Mat}

\DeclareMathOperator{\SL}{SL}

\DeclareMathOperator{\diag}{diag}

\DeclareMathOperator{\gen}{span}
\DeclareMathOperator{\rank}{rank}
\DeclareMathOperator{\ch}{ch}

%% ENVIRONMENTS *************************************************************************************************************************************

\newtheorem{thm}{Theorem}[section]
\newtheorem{prop}{Proposition}[section]
\newtheorem{lemma}{Lemma}[section]
\newtheorem{cor}{Corollary}[section]

%% CONTEÚDO DO ARTIGO *******************************************************************************************************************************

\begin{document}

\title[On Varieties of Lines on Linear Sections of Grassmannians]{On Varieties of Lines on Linear Sections of Grassmannians}
\author{Rafael Lucas de Arruda}
\date{\today}

\maketitle

\begin{abstract}
General linear sections of codimension $2$ of the Grassmannians $\GG(1, 4)$ and $\GG(1, 5)$ appear in the classification of Fano manifolds of high index. Unlike Grassmannians, these manifolds are not homogeneous. Nevertheless, their automorphisms groups have finitely many orbits. In this work we first compute the orbits of these actions. Then we give a description of the variety of lines (under the Plücker embedding) passing through a fixed point in each orbit of the action. As an application we show that these Fano manifolds are not weakly $2$-Fano, completing the classification of weakly $2$-Fano manifolds of high index, initiated by Araujo and Castravet in \cite{araujo-castravet}.
\end{abstract}

\tableofcontents

\section{Introduction}
\label{sec:intro}

A \textit{Fano} manifold is a smooth, complex, projective variety $X$ with ample anticanonical class, $-K_{X} > 0$. The Fano condition has strong implications on the geometry of $X$. For example, any Fano manifold $X$ is \textit{rationally connected}, that is, any two general points of $X$ can be connected by a rational curve contained in $X$. Examples of Fano manifolds are hypersurfaces of degree $d \leq n$ in $\PP^{n}$, Grassmannians $\GG(k, N)$ of $k$-planes of $\PP_{\CC}^{N}$, and their general linear sections $\GG(k, N) \cap H^{l}$ of codimension $l$, for $1 \leq k \leq (N - 1)/2$ and $l \leq N$.

The \textit{index} $i_{X}$ of a Fano manifold $X$ of dimension $n$ is defined by
\[ i_{X} = \max \{ m \in \NN \mid -K_{X} = mH, \ \textrm{for some Cartier divisor $H$} \}. \]
A classical result of Kobayashi and Ochiai asserts that $i_{X} \leq n + 1$. Fano manifolds with index $i_{X} \geq n - 2$ are completely classified (see \cite{ag-iv-fanovarieties}). Among those are the general linear sections $\GG(1, 4) \cap H^{2}$ and $\GG(1, 5) \cap H^{2}$.

Recall that, under the Plücker embedding, the variety of lines of $\GG(k, N)$ through a fixed point is isomorphic to $\PP^{k} \times \PP^{N-k-1}$. It does not depend on the choice of the point because $\GG(k, N)$ is homogeneous. Although the linear sections $\GG(1, 4) \cap H^{2}$ and $\GG(1, 5) \cap H^{2}$ are not homogeneous, their automorphisms groups have finitely many orbits. Their varieties of lines (under the Plücker embedding) through a fixed point is an intersection of two divisors of type $(1, 1)$ in $\PP^{1} \times \PP^{2}$ and $\PP^{1} \times \PP^{3}$, respectively. Our aim is to describe these varieties. \\

Let $X = \GG(1, 4) \cap H^{2}$ be the intersection of $\GG(1, 4)$ with a general linear subspace $H^{2}$ of codimension $2$, under the Plücker embedding. The four orbits of the automorphism group $\Aut(X) \subset \PGL(5, \CC)$ can be described as follows (see Section \ref{sec:preli} for details). There is a smooth conic $C \subset \PP^{4}$ such that:

\begin{itemize}
  \item the orbit $o_{1}$ consists of the lines in $\PP^{4}$ tangent to $C$;
  \item the orbit $o_{2}$ consists of the lines in $\PP^{4}$ that lie in the plane $P$ generated by $C$ but are not tangent to $C$;
  \item the orbit $o_{3}$ consists of the lines in $\PP^{4}$ that intersect $C$ but do not lie in $P$;
  \item the orbit $o_{4}$ consists of the lines in $\PP^{4}$ that do not intersect $P$ (this is a dense orbit).
\end{itemize}
We have the following description of the variety of lines passing through a fixed point $x \in X$:

\begin{thm}
\label{thm:zx_g25_h2}
Let $X = \GG(1, 4) \cap H^{2}$ be a general linear section of codimension $2$ of $\GG(1, 4)$, and let $Z_{x} \subset \PP^{1} \times \PP^{2}$ be the variety of lines on $X$ passing through a point $x \in X$. Then $Z_{x}$ has pure dimension $1$, and its numerical class in $N_{1}(\PP^{1} \times \PP^{2})$ is $[Z_{x}] \equiv 2[L_{1}] + [L_{2}]$, where $[L_{1}]$ is the class of a line in a fiber of the first projection $\PP^{1} \times \PP^{2} \rightarrow \PP^{1}$ and $[L_{2}]$ the class of a fiber of the second projection. For $x$ in each orbit of the action of $\Aut(X)$ on $X$ we have the following description of $Z_{x}$:

\begin{itemize}
  \item for $x \in o_{1}$, $Z_{x}$ has two irreducible components, all rational. One of them has numerical class $[L_{1}]$ (and multiplicity $2$), and the other one has numerical class $[L_{2}]$;
  \item for $x \in o_{2}$, $Z_{x}$ has three irreducible components, all rational. Two of them has numerical class $[L_{1}]$, and the other one has numerical class $[L_{2}]$;
  \item for $x \in o_{3}$, $Z_{x}$ has two irreducible components, all rational. One of them has numerical class $[L_{1}]$ and the other one has numerical class $[L_{1}] + [L_{2}]$;
  \item for $x \in o_{4}$, $Z_{x} \cong \PP^{1}$.
\end{itemize}
\end{thm}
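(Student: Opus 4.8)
The plan is to translate the problem into linear algebra over the pencil of skew forms cutting out $X$. Write $V = \CC^5$ and let $x$ correspond to the $2$-plane $W \subset V$. The lines of $\GG(1,4)$ through $x$ are the pencils of $2$-planes containing $W$, whose tangent directions at $x$ are the rank-one maps $u^{*}\otimes \bar v$; this gives the Segre identification $\PP^1\times\PP^2 \cong \PP(W)\times\PP(V/W)$, $([u],[\bar v])\mapsto [u\wedge v]$. If $\omega_1,\omega_2\in\wedge^2 V^{*}$ are the two alternating forms defining $H^2$, the corresponding line lies on $X$ exactly when $\omega_i(u\wedge v)=0$ for $i=1,2$. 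Since $x\in X$ forces $\omega_i|_{W}=0$, each $\omega_i$ descends to a bilinear form $\bar\omega_i$ on $W\times(V/W)$, and $Z_x = V(\bar\omega_1,\bar\omega_2)$ is the intersection of two divisors of type $(1,1)$, as asserted in the introduction.

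Next I would organize $Z_x$ over $\PP(W)=\PP^1$: for $u\in W$ the pair $\bigl(\bar\omega_1(u,-),\bar\omega_2(u,-)\bigr)$ is a $2\times 3$ matrix $M(u)$ depending linearly on $u$, and the fibre of the first projection $Z_x\to\PP^1$ over $[u]$ is $\PP(\ker M(u))$. Thus the structure of $Z_x$ is governed by the Kronecker classification of the pencil $u\mapsto M(u)$. I would first check that, for $X$ smooth, only pencils of generic rank $2$ occur, so every fibre is a point, or a line over the finitely many $[u]$ with $\rank M(u)=1$; this yields $\dim Z_x=1$ with no $2$-dimensional component, i.e.\ purity. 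Granting that $Z_x$ then meets the two $(1,1)$-divisors properly, its class is $(H_1+H_2)^2$, where $H_1,H_2$ are the hyperplane pullbacks from the two factors; using $H_1^2=0$, $H_1H_2=[L_1]$ and $H_2^2=[L_2]$ one obtains $[Z_x]=2[L_1]+[L_2]$.

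For the refined statement I would compute $M(u)$ on an explicit representative of each orbit, using the normal form of the general pencil $\gen\{\omega_1,\omega_2\}$ whose kernels sweep the conic $C$, and reduce $M(u)$ to Kronecker canonical form. I expect the four orbits to match the four admissible types exactly: the single block $L_2$ (kernel tracing a conic, $Z_x\cong\PP^1$) for the dense orbit $o_4$; the block $L_1\oplus R_1$ for $o_3$, giving a vertical line of class $[L_1]$ together with a section of class $[L_1]+[L_2]$; and $L_0\oplus R_2$ for $o_1$ and $o_2$, contributing a constant section of class $[L_2]$ (the fixed kernel vector of the $L_0$ block) together with the rank-drop locus of the regular $2\times 2$ block $R_2$. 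In the secant case $o_2$ the binary form $\det R_2$ has two distinct roots, yielding two distinct vertical lines of class $[L_1]$; in the tangent case $o_1$ it has a double root with $R_2$ non-diagonalizable, so the two lines coincide into a single line of class $[L_1]$ carrying multiplicity $2$. All components are rational in each case.

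The crux is the last step: producing clean representatives in each orbit from the description via $C$ and $P$, computing the restricted forms $\bar\omega_i$, and matching the incidence conditions (tangent, secant in $P$, meeting, or disjoint from $C$) to the degeneration type of $M(u)$. The most delicate points are separating $o_1$ from $o_2$ — showing that tangency of $\ell$ to $C$ is precisely what forces the double root and the nilpotent $R_2$, hence the embedded multiplicity-$2$ structure on the $[L_1]$ component — and, already in the purity argument, verifying that the pathological Kronecker types (such as $L_1\oplus L_0\oplus L_0^{T}$), which would make $Z_x$ two-dimensional, are excluded by the smoothness of the general section $X$.
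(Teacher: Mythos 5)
Your proposal follows essentially the same route as the paper: both reduce $Z_{x}$ to the kernel of the $2\times 3$ pencil $(r:s)\mapsto M(rp+sq)$ coming from the two skew forms, compute the class as $([\alpha]+[\beta])^{2}=2[L_{1}]+[L_{2}]$, and then analyze rank drops on an explicit representative of each orbit. Your Kronecker-canonical-form bookkeeping is a systematic repackaging of the paper's direct minor computations (for instance, the double root of the determinant of your regular block in the tangent case $o_{1}$ is exactly the paper's minor $M_{12}=-s^{2}$, which is where the multiplicity $2$ on the $[L_{1}]$ component comes from), and it has the mild advantage of absorbing the purity check and the exclusion of degenerate types into one classification, where the paper settles these implicitly through the same explicit computations. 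The one point of genuine divergence is $o_{4}$: you read off $Z_{x}\cong\PP^{1}$ directly from the pencil being a single full block, whose kernel sweeps out a conic in $\PP^{2}$, whereas the paper does not exhibit a representative there but instead uses smoothness of $Z_{x}$ for general $x$, adjunction, and Riemann--Roch to get genus $0$; your version is more self-contained and sidesteps the (unstated) connectedness needed to pass from $\deg K_{Z_{x}}=-2$ to $Z_{x}\cong\PP^{1}$.
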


Now let $Y = \GG(1, 5) \cap H^{2}$ be the intersection of $\GG(1, 5)$ with a general linear space $L = H^{2}$ of codimension $2$, under the Plücker embedding. The four orbits of the automorphism group $\Aut(Y) \subset \PGL(6, \CC)$ can be described as follows (see Section \ref{sec:preli} for details). There are three disjoint lines $l_{1}, l_{2}, l_{3}$ of $\PP^{5}$, $3$-planes $V_{j}$, for $j = 1, 2, 3$, generated by the ${l_{i}}'s$ with $i \neq j$, and $V = \cup_{i=1}^{3} V_{i}$ such that:

\begin{itemize}
  \item the orbit $o_{1}$ consists of the lines in $\PP^{5}$ that intersect two ${l_{i}}'s$;
  \item the orbit $o_{2}$ consists of the lines in $\PP^{5}$ that intersect only one $l_{i}$;
  \item the orbit $o_{3}$ consists of the lines in $\PP^{5}$ that intersect $V \setminus \cup_{i=1}^{3} l_{i}$;
  \item the orbit $o_{4}$ consists of the lines in $\PP^{5}$ that do not intersect $V$ (this is a dense orbit).
\end{itemize}
We have the following description of the variety of lines passing through a fixed point $x \in Y$:

\begin{thm}
\label{thm:zx_g26_h2}
Let $Y = \GG(1, 5) \cap H^{2}$ be a general linear section of codimension $2$ of $\GG(1, 5)$, and let $Z_{x} \subset \PP^{1} \times \PP^{3}$ be the variety of lines on $Y$ passing through a point $x \in Y$. Then $Z_{x}$ has pure dimension $2$, and its numerical class in $N_{2}(\PP^{1} \times \PP^{3})$ is $[Z_{x}] \equiv 2[P] + [L]$, where $[P]$ is the class of a plane in a fiber of the first projection $\PP^{1} \times \PP^{3} \rightarrow \PP^{1}$ and $[L]$ the class of the inverse image under the second projection of a line in $\PP^{3}$. For $x$ in each orbit of the action of $\Aut(Y)$ on $Y$ we have the following description of $Z_{x}$:

\begin{itemize}
  \item for $x \in o_{1}$, $Z_{x}$ has three irreducible components. Two of them have numerical class $[P]$, and the other one has numerical class $[L]$;
  \item for $x \in o_{2}$, $Z_{x}$ has two irreducible components. One of them has numerical class $[P]$, and the other one has numerical class $[P] + [L]$;
  \item for $x \in o_{3}$, $Z_{x}$ is isomorphic to the blowup of a quadric cone in $\PP^{3}$ at the \linebreak vertex, or equivalently, isomorphic to the Hirzebruch surface \linebreak $\FF_{2} = \PP(\Ocal_{\PP^{1}} \oplus \Ocal_{\PP^{1}}(-2))$;
  \item for $x \in o_{4}$, $Z_{x}$ is isomorphic to a smooth quadric in $\PP^{3}$.
\end{itemize}
\end{thm}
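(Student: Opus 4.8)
The plan is to realize $Z_{x}$ explicitly as a complete intersection of two divisors of bidegree $(1,1)$ and then to analyze it orbit by orbit through the second projection to $\PP^{3}$.

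First I would set up coordinates. A point $x\in\GG(1,5)$ is a $2$-plane $W\subset\CC^{6}$, and, as recalled in the introduction, the lines of $\GG(1,5)$ through $x$ are parametrized by flags $W_{1}\subset W\subset W_{3}$ with $\dim W_{1}=1$, $\dim W_{3}=3$; this identifies the variety of lines through $x$ with $\PP(W)\times\PP(\CC^{6}/W)\cong\PP^{1}\times\PP^{3}$. Writing $W=\langle e_{1},e_{2}\rangle$, fixing a complement $U=\langle e_{3},\dots,e_{6}\rangle$, and letting $a=s\,e_{1}+t\,e_{2}$ represent $W_{1}$ and $u\in U$ represent $W_{3}/W$, the two Pl\"ucker hyperplanes cutting out $Y$ correspond to skew forms $\omega_{1},\omega_{2}$ on $\CC^{6}$. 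A short computation (using $\omega_{i}|_{W}=0$, which holds because $x\in Y$) shows that the line determined by $(W_{1},W_{3})$ lies on $Y$ if and only if
\[
F_{i}\big([s:t],[u]\big)\;=\;s\,\omega_{i}(e_{1},u)+t\,\omega_{i}(e_{2},u)\;=\;0,\qquad i=1,2.
\]
Thus $Z_{x}=V(F_{1})\cap V(F_{2})\subset\PP^{1}\times\PP^{3}$ with $F_{1},F_{2}$ of bidegree $(1,1)$, and, introducing the $2\times2$ matrix of linear forms $M(u)=\big(\omega_{i}(e_{j},u)\big)_{i,j}$, we have $Z_{x}=\{\,([s:t],[u]):M(u)\,(s,t)^{t}=0\,\}$. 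For the numerical class, once it is checked that $F_{1},F_{2}$ share no common factor (so that $Z_{x}$ has pure dimension $2$), the class is that of a complete intersection: writing $h_{1},h_{2}$ for the pullbacks of the hyperplane classes of the two factors, $[Z_{x}]=(h_{1}+h_{2})^{2}=2h_{1}h_{2}+h_{2}^{2}$, and since $[P]=h_{1}h_{2}$ and $[L]=h_{2}^{2}$ this is exactly $2[P]+[L]$. The absence of a common factor --- equivalently, that no orbit produces a three-dimensional $Z_{x}$ --- will fall out of the case analysis below.

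The orbit-by-orbit description I would obtain by studying the second projection $\pi\colon Z_{x}\to\PP^{3}$. Over a point $[u]$ the fibre is a single point when $\rank M(u)=1$, is all of $\PP^{1}$ when $M(u)=0$, and is empty when $\rank M(u)=2$. Hence $Z_{x}$ is built from two pieces: the section living over the quadric $Q=\{\det M(u)=0\}\subset\PP^{3}$, and the product $\PP^{1}\times R_{0}$ over the linear locus $R_{0}=\{M(u)=0\}$ (the common zeros of the four forms $\omega_{i}(e_{j},\cdot)$). I would then substitute, for each orbit, the normal form of the pencil $\langle\omega_{1},\omega_{2}\rangle$ and the representative $W$ dictated by the position of the line $\ell_{x}=\PP(W)$ relative to the configuration $l_{1},l_{2},l_{3}$, $V$ of Section~\ref{sec:preli}, compute $M(u)$, and read off the types of $Q$ and $R_{0}$. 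For $x\in o_{4}$ (generic), $Q$ is a smooth quadric and $R_{0}=\varnothing$, so $\pi$ is an isomorphism and $Z_{x}\cong Q$; for $x\in o_{3}$ the matrix $M$ drops to rank three, $Q$ becomes a quadric cone and $R_{0}$ is its vertex, so $\pi$ is an isomorphism away from the vertex and has a $\PP^{1}$ over it, exhibiting $Z_{x}$ as the blowup of the cone at the vertex, i.e. $\FF_{2}$. For $x\in o_{1},o_{2}$ the quadric $Q$ degenerates (reducible) and/or $R_{0}$ acquires positive dimension, producing plane components of class $[P]$ (where the kernel direction $[s:t]$ is constant over a plane of $\PP^{3}$), components of class $[P]+[L]$ (where the kernel direction varies over a plane), and components of class $[L]$ (the products $\PP^{1}\times\{\text{line}\}$ coming from a line contained in $R_{0}$), with the classes adding up to $2[P]+[L]$.

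The hard part will be the case-by-case work for $o_{1}$, $o_{2}$, and $o_{3}$: choosing the correct normal form $(\omega_{1},\omega_{2},W)$ for each orbit and then certifying that the scheme $Z_{x}$ is reduced with exactly the listed components --- in particular, ruling out a common factor of $F_{1},F_{2}$, confirming the stated numerical class of each individual component, and identifying the surface in the $o_{3}$ case with $\FF_{2}$ rather than merely a singular quadric (for which it suffices to check that the exceptional curve $\pi^{-1}(\text{vertex})$ is a $(-2)$-curve, or to exhibit the $\PP^{1}$-bundle structure directly). The computations for $o_{4}$ and the overall numerical class are routine once the $(1,1)$ equations are in hand.
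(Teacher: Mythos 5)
Your proposal is correct and follows essentially the same route as the paper: the same bidegree-$(1,1)$ equations $s\,\omega_{i}(e_{1},u)+t\,\omega_{i}(e_{2},u)=0$ derived from the two skew forms, the same complete-intersection computation $(h_{1}+h_{2})^{2}=2[P]+[L]$, and for $o_{3}$ exactly the paper's argument (the determinant of the $2\times 2$ matrix cuts out a quadric cone, the second projection is an isomorphism away from the vertex and has a $\PP^{1}$ over it, hence $\FF_{2}$). Two points of genuine difference are worth recording. First, you organize all four orbits uniformly by the rank stratification of $M(u)$ --- the quadric $Q=\{\det M=0\}$ together with the base locus $R_{0}=\{M=0\}$ --- whereas the paper handles $o_{1}$ and $o_{2}$ by inspecting, for each $(r:s)$, the minors of the system and the dimension of its solution space; the two computations are equivalent, but your version makes it more transparent why the component classes are $[P]$ (constant kernel over a plane), $[P]+[L]$ (varying kernel over a plane), and $[L]$ (the product over a line in $R_{0}$). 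Second, for $o_{4}$ you identify $Z_{x}$ directly with the smooth quadric $\{\det M=0\}$ via the second projection, while the paper instead checks smoothness by the Jacobian criterion, computes $-K_{Z_{x}}=2[\beta]|_{Z_{x}}$ by adjunction, and invokes Kobayashi--Ochiai; your route is more elementary, though you should add the one-line justification that the bijection $\pi\colon Z_{x}\to Q$ is an isomorphism (e.g.\ exhibit the inverse $Q\to Z_{x}$ given by the adjugate of $M$, which is nowhere zero on $Q$ since $R_{0}=\varnothing$, or appeal to Zariski's main theorem using normality of $Q$). The deferred case-by-case computations for $o_{1}$, $o_{2}$, $o_{3}$ are indeed routine with the normal forms of Section~\ref{sec:preli} and come out exactly as you predict, so I see no gap.
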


As an application of these results, in Section \ref{sec:w2fano} we complete the classification of weakly $2$-Fano manifolds, initiated in \cite{araujo-castravet} by Araujo and Castravet. We prove that the general linear sections $\GG(1, 4) \cap H^{2}$ and $\GG(1, 5) \cap H^{2}$ are not weakly $2$-Fano. \\

\noindent \textbf{Notation.} For a $(k + 1)$-dimensional vector subspace $W$ of $\CC^{N+1}$ we will denote by $\PP(W)$ its corresponding $k$-plane in $\PP_{\CC}^{N}$, and by $[\PP(W)]$ its corresponding point in the Grassmannian $\GG(k, N)$. We will denote by $\PP(\mathscr{E})$ the projectivization of a vector bundle $\mathscr{E}$, the variety of lines of $\mathscr{E}$. \\

\noindent \textbf{Acknowledgment.} I thank Carolina Araujo for the introduction to the subject and many helpful comments.

\section{Preliminaries}
\label{sec:preli}

We will consider the Grassmannian $\GG(k, N)$ of $k$-planes of $\PP_{\CC}^{N}$ embedded into $\PP(\bigwedge^{k+1}\CC^{N+1})$ by the Plücker embedding
$$
\begin{array}{rcl}
\GG(k, N) & \longrightarrow & \PP(\bigwedge^{k+1}\CC^{N+1}) \\
\PP(\gen\{ u_{1}, \ldots, u_{k+1} \}) & \longmapsto & \PP(u_{1} \wedge \cdots \wedge u_{k+1}).
\end{array}
$$
Denote by $H^{l}$ a general linear subspace of codimension $l$ of $\PP(\bigwedge^{k+1}\CC^{N+1})$. \\

\noindent \textbf{The Automorphism Group of $\GG(1, N) \cap H^{l}$.} Given a subvariety $Y$ of a variety $X$, we will denote by $\Aut(Y, X)$ the group of automorphisms of $X$ that induce automorphisms of $Y$, that is,
$$\Aut(Y, X) = \{ \varphi \in \Aut(X) \mid \varphi(Y) \subset Y \}.$$

It is well known that $\Aut(\GG(k, N)) \cong \Aut(\GG(k, N), \PP(\bigwedge^{k+1}\CC^{N+1})) \cong \linebreak \PGL(N + 1, \CC)$, for $k < (N + 1)/2$ (see, for example, \cite[Thm. 10.19]{harris}). For general linear sections of Grassmannians, J. Piontkowski and A. Van de Ven in \cite{piont-van-de-ven} determined the automorphism groups of $\GG(1, N) \cap H^{2}$. Their first result is the following:

\begin{thm}[{\cite[Thm. 1.2 and Cor. 1.3]{piont-van-de-ven}}]
\label{thm:autlinsecgrass}

For $N \geq 4$ and a general linear subspace $H^{l} \subset \PP(\bigwedge^{2}\CC^{N+1})$ of codimension $l \leq 2N - 5$,
\[ \Aut(\GG(1, N) \cap H^{l}) \cong \textstyle \Aut(\GG(1, N) \cap H^{l}, H^{l}). \]
If $l \leq N - 2$, then
\[ \Aut(\GG(1, N) \cap H^{l}) \cong \textstyle \Aut(\GG(1, N) \cap H^{l}, \PP(\bigwedge^{2} \CC^{N+1})) \cap \Aut(H^{l}, \PP(\bigwedge^{2} \CC^{N+1})). \]
\end{thm}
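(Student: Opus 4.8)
The plan is to treat the two assertions separately, according to their geometric content: the first is a statement of \emph{linear normality} combined with the rigidity of the Picard group, whereas the second requires reconstructing the Grassmannian itself from the intrinsic geometry of the section. Throughout I write $W := \PP(\bigwedge^{2}\CC^{N+1})$, $G := \GG(1, N)$, and $X := G \cap H^{l}$, so that $\dim X = 2N - 2 - l$, and I let $L := \Ocal_{X}(1)$ be the restriction of the Pl\"ucker hyperplane class.

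For the first isomorphism I would proceed in three steps. First, since $l \le 2N - 5$ forces $\dim X \ge 3$ and $\Pic(G) = \ZZ \cdot \Ocal_{G}(1)$, the Grothendieck--Lefschetz theorem yields that the restriction $\Pic(G) \to \Pic(X)$ is an isomorphism, so $\Pic(X) = \ZZ \cdot L$ with $L$ its ample generator. Second, I would establish linear normality, i.e.\ that $X \hookrightarrow H^{l}$ is given by the complete linear system $|L|$ and that $H^{l}$ is canonically identified with $\PP(H^{0}(X, L)^{\vee})$. Writing $H^{l}$ as an intersection of $l$ general hyperplanes and filtering by the intermediate sections $X_{0} = G, X_{1}, \ldots, X_{l} = X$, each step fits in
\[ 0 \longrightarrow \Ocal_{X_{j-1}} \longrightarrow \Ocal_{X_{j-1}}(1) \longrightarrow \Ocal_{X_{j}}(1) \longrightarrow 0 . \]
Using $H^{0}(\Ocal_{X_{j-1}}) = \CC$ (connectedness, from Lefschetz) and $H^{1}(\Ocal_{X_{j-1}}) = 0$ (which follows from $b_{1}(X_{j-1}) = 0$, again by the Lefschetz hyperplane theorem comparing with $G$), the restriction $H^{0}(\Ocal_{X_{j-1}}(1)) \to H^{0}(\Ocal_{X_{j}}(1))$ is surjective with one-dimensional kernel. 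Starting from $h^{0}(G, \Ocal_{G}(1)) = \binom{N+1}{2}$ this gives $h^{0}(X, L) = \binom{N+1}{2} - l = \dim H^{l} + 1$, so the embedding is linearly normal. Third, any $\varphi \in \Aut(X)$ must fix the unique ample generator $L$ of $\Pic(X)$, hence acts on $H^{0}(X, L)$ and so extends to a linear automorphism of $H^{l} = \PP(H^{0}(X, L)^{\vee})$ preserving $X$; since restriction to $X$ is the inverse, $\Aut(X) \cong \Aut(X, H^{l})$.

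For the second isomorphism the task is to upgrade this extension: when $l \le N - 2$, an automorphism of $X$ should be induced by a projective transformation of the original $\PP^{N}$, i.e.\ by an element $g \in \PGL(N+1, \CC)$ acting on $W$ through $\bigwedge^{2}$ and preserving both $G$ and $H^{l}$. The strategy is to reconstruct the Grassmann structure from the intrinsic geometry of $X$. The key point is that $l \le N - 2$ is exactly the bound for which the two families of maximal linear subspaces of $G$ through a point --- equivalently the two rulings of the Segre variety $\PP^{1} \times \PP^{N-2}$ that is the variety of minimal rational tangents (VMRT) of $G$ --- still cut positive-dimensional linear subspaces on $X$; indeed the VMRT of $X$ at a general point $x$ is the section of this Segre by the codimension-$l$ subspace $\PP(T_{x}X)$, of dimension $(N-1) - l \ge 1$. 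An automorphism $\varphi$ of $X$ preserves its lines and hence its VMRTs together with their ruling structure, and from this one recovers the incidence geometry of the lines of $\PP^{N}$, i.e.\ a $g \in \PGL(N+1, \CC)$ inducing $\varphi$. Such a $g$ preserves $X$ and therefore its linear span $H^{l}$, so $\varphi$ lies in $\Aut(X, W) \cap \Aut(H^{l}, W)$; the reverse inclusion is immediate, giving the isomorphism.

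The first isomorphism is essentially cohomological bookkeeping once the Lefschetz inputs are in hand, so I expect it to be routine. The main obstacle is the second isomorphism: reconstructing the entire Grassmannian $G$ --- not merely the linear span $H^{l}$ --- from the abstract variety $X$, and checking that every automorphism respects this reconstruction. This is precisely where the sharper hypothesis $l \le N - 2$ is essential, since it is the exact range in which the VMRT of $X$ (equivalently the induced families of linear subspaces) stays positive dimensional and retains enough of the Segre geometry of $\GG(1, N)$ to pin down the Grassmann structure uniquely; for larger $l$ this reconstruction degenerates and automorphisms of $X$ need not extend to $W$.
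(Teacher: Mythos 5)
The paper offers no proof of this statement --- it is imported wholesale from Piontkowski--Van de Ven (their Thm.~1.2 and Cor.~1.3) --- so your attempt can only be judged on its own terms. Your treatment of the first isomorphism is sound and is the standard argument: $l \le 2N-5$ is exactly the condition $\dim X \ge 3$ needed to run Grothendieck--Lefschetz down the chain $G = X_{0} \supset X_{1} \supset \cdots \supset X_{l} = X$ and get $\Pic(X) = \ZZ \cdot L$, the cohomological bookkeeping gives linear normality, and an automorphism then fixes the ample generator, acts projectively on $H^{0}(X, L)$, and extends uniquely to $H^{l}$ because $X$ spans it. I have no objection to that half.

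The second isomorphism is where the real content lies, and there your write-up is a plan rather than a proof. The sentence ``an automorphism of $X$ preserves its VMRTs together with their ruling structure, and from this one recovers the incidence geometry of the lines of $\PP^{N}$'' compresses several claims that each need an argument. (i) The VMRT at $x = [\PP(W)]$ is only a codimension-$l$ linear section $Z_{x}$ of the Segre $\PP(W) \times \PP(\CC^{N+1}/W)$, and the two ``rulings'' are defined via the Grassmann structure you are trying to reconstruct; you must show they are intrinsic to the abstract variety $X$ and cannot be interchanged by $\varphi$. This is genuinely delicate in the range you care about: for $\GG(1,5) \cap H^{2}$ and $x$ in the dense orbit, the paper computes $Z_{x}$ to be a smooth quadric surface, whose two rulings are abstractly symmetric, so distinguishing them requires a global argument (e.g.\ comparing the loci in $X$ swept out by the corresponding families of lines), not just the local VMRT. (ii) Granting (i), the induced permutation of the family of $\alpha$-plane sections of $X$ is a priori only an automorphism of a parameter scheme; you must show this scheme is identified with $\PP^{N}$, that the permutation is a projective transformation $g$, and that $\bigwedge^{2}g$ restricted to $H^{l}$ actually equals $\varphi$ --- none of which is addressed, and it is precisely here that $l \le N-2$ must do work beyond the dimension count $(N-1)-l \ge 1$. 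Finally, your ``the reverse inclusion is immediate'' glosses over the fact that restriction from $\Aut(X, W) \cap \Aut(H^{l}, W)$ to $\Aut(X)$ is not injective as literally stated (any transformation of $W$ fixing $H^{l}$ pointwise lies in that intersection); the statement one actually proves is about the stabilizer of $H^{l}$ inside $\PGL(N+1, \CC)$, and your argument should produce $g$ there and settle its uniqueness.
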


The theorem says that the automorphisms of $\GG(1, N) \cap H^{l}$ are the automorphisms in $\PGL(N + 1 , \CC)$ such that their induced action on the dual space $\PP(\bigwedge^{2}\CC^{N+1})^{\ast}$ fixes $(H^{l})^{\ast}$. If $\{ e_{1}, \ldots, e_{N+1} \}$ is a basis of $\CC^{N+1}$ and $E_{ij} \in \Mat(N + 1, \CC)$ the matrix with the entry $(i, j)$ equal to $1$ and otherwise equal to $0$, then
$$
\begin{array}{rcl}
\left( \bigwedge^{2}\CC^{N+1} \right)^{\ast} & \longrightarrow & \Antisym(N + 1, \CC) \\
\sum_{i,j} \lambda_{i,j}(e_{i} \wedge e_{j})^{\ast} & \longmapsto & \frac{1}{2} \sum_{i,j} \lambda_{i,j}(E_{i,j} - E_{j,i})
\end{array}
$$
is an isomorphism of vector spaces; that allows us to identify $\PP(\bigwedge^{2}\CC^{N+1})^{\ast}$ with $\PP(\Antisym(N + 1, \CC))$. A point $[\PP(\gen\{ p, q \})] \in \GG(1, N)$ is contained in a hyperplane $H = \PP(A)^{\ast}$, with $A \in \Antisym(N + 1, \CC)$, if and only if, $pA\pups{t}{q} = 0$. The action of an automorphism $\PP(T) \in \PGL(N + 1, \CC)$ on $\PP(\Antisym(N + 1, \CC))$ is given by
$$
\begin{array}{rcl}
\PP(\Antisym(N + 1, \CC)) & \longrightarrow & \PP(\Antisym(N + 1, \CC)) \\
\PP(A) & \longmapsto & \PP(\pups[0pt]{t}{T}^{-1}AT^{-1}).
\end{array}
$$
Therefore, a linear subspace $H^{l} \subset \PP(\bigwedge^{2}\CC^{N+1})$ of codimension $l$, dually given by $\PP(\gen\{ A_{1}, \ldots, A_{l} \})^{\ast}$, is preserved under $T$ if and only if,
\begin{equation}
\pups[0pt]{t}{T}^{-1}A_{i}T^{-1} \in \gen\{ A_{1}, \ldots, A_{l} \}, \quad \textrm{for all} \ i = 1, \ldots, l. \label{eq:autlin}
\end{equation}

The second step in the task of determining the automorphism groups is done separately for the different cases using the above description. Consider the case $l = 1$. The automorphism group of $Y = \GG(1, 2n - 1) \cap H$, where $H = \PP(A)^{\ast}$ is a general hyperplane of $\PP(\bigwedge^{2}\CC^{2n})$, is isomorphic to the group $\Sp(2n, \CC) / \{ \pm I\}$, where $\Sp(2n, \CC)$ denotes the symplectic group associated to $A$. Its action on $Y$ is homogeneous (see \cite[Prop. 2.1]{piont-van-de-ven}). The automorphism group of $X = \GG(1, 2n) \cap H$, where $H = \PP(A)^{\ast}$ is a general hyperplane of $\PP(\bigwedge^{2}\CC^{2n+1})$, act on $X$ with two orbits (see \cite[Prop. 5.3]{piont-van-de-ven} for details). \\

The next results concern automorphism groups of linear sections of codimension $l = 2$. \\

\noindent \textbf{The Case $\GG(1, 2n - 1) \cap H^{2}$.}

\begin{thm}[{\cite[Thm. 3.5]{piont-van-de-ven}}]
\label{thm:autg22n}

For $n \geq 3$ the automorphism group of the intersection of $\GG(1, 2n - 1)$ with a general linear subspace of codimension $2$ of $\PP(\bigwedge^{2}\CC^{2n})$ is isomorphic to the subgroup of $\PGL(2n, \CC)$ that consists of the elements
$$P_{\sigma} \cdot
\left(
  \begin{array}{ccc}
    t_{1} & & \raisebox{-5pt}{\textrm{\huge $0$}} \\[-2pt]
     & \ddots &  \\[-2pt]
    \textrm{\huge $0$} & & t_{n} \\
  \end{array}
\right)
$$
where $t_{1}, \ldots, t_{n} \in \SL(2, \CC)$ and $P_{\sigma}$ is the identity for $n \geq 5$ and otherwise defined by $P_{\sigma}(e_{2i}) = e_{2\sigma(i)}, P_{\sigma}(e_{2i-1}) = e_{2\sigma(i)-1}$,
$$
\textrm{for} \ \sigma \in
\left\{
  \begin{array}{ll}
    S(n), & \hbox{if $n = 3$;} \\
    \{ (1\ 2\ 3\ 4), (2\ 1\ 4\ 3), (3\ 4\ 1\ 2), (4\ 3\ 2\ 1) \}, & \hbox{if $n = 4$.}
  \end{array}
\right.
$$
\end{thm}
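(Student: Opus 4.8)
The plan is to translate the statement into a problem about pencils of alternating forms and then analyze the stabilizer of such a pencil. By Theorem \ref{thm:autlinsecgrass} (which applies since $l=2\le N-2=2n-3$ for $n\ge 3$), an automorphism of $Y=\GG(1,2n-1)\cap H^{2}$ is exactly an element $\PP(T)\in\PGL(2n,\CC)$ whose associated congruence $S=T^{-1}$ stabilizes the pencil $\Pi=\gen\{A_{1},A_{2}\}\subset\Antisym(2n,\CC)$ dual to $H^{2}$; that is, by \eqref{eq:autlin}, both $\pups{t}{S}A_{1}S$ and $\pups{t}{S}A_{2}S$ lie in $\Pi$. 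So I would first reduce the theorem to: \emph{describe the subgroup of $\PGL(2n,\CC)$ of congruences stabilizing a general pencil of alternating forms.}

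Next I would fix a normal form. For general $H^{2}$ a general member of $\Pi$ is nondegenerate, so after changing the basis of $\Pi$ I may assume $A_{1}$ invertible; then $B=A_{1}^{-1}A_{2}$ is self-adjoint for $A_{1}$ (one checks $\pups{t}{B}A_{1}=A_{1}B$). Each eigenspace of $B$ is $A_{1}$-orthogonal to the others and inherits a nondegenerate alternating form, hence is even-dimensional, so for a general pencil $B$ is regular semisimple with $n$ distinct eigenvalues $\lambda_{1},\dots,\lambda_{n}$, each of multiplicity $2$. Choosing symplectic bases on the eigenplanes yields the Kronecker form
\[
A_{1}=\diag(J,\dots,J),\qquad A_{2}=\diag(\lambda_{1}J,\dots,\lambda_{n}J),\qquad J=\left(\begin{smallmatrix}0&1\\-1&0\end{smallmatrix}\right),
\]
so that the degenerate members of $\Pi$ (the $n$ zeros of the Pfaffian, a binary form of degree $n$ on $\PP(\Pi)\cong\PP^{1}$) are $n$ distinct points, and the pairs $\{e_{2i-1},e_{2i}\}$ index the $n$ blocks permuted by the $P_{\sigma}$.

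The core is then an exact-sequence analysis of the stabilizer $G$. Any $S\in G$ acts linearly on $\Pi$, hence induces $\rho(S)\in\PGL(2)=\Aut(\PP(\Pi))$; since the degenerate locus is intrinsic, $\rho(S)$ permutes the $n$ points, giving $\rho\colon G\to\PGL(2)$ whose image is the group of projective symmetries of those $n$ points. For the kernel, $\rho(S)=\mathrm{id}$ means $S$ scales $\Pi$, and after rescaling $S$ it preserves $A_{1}$ and $A_{2}$ simultaneously; such an $S$ commutes with $B$, hence is block diagonal, and preserves $J$ on each block, hence restricts to $\Sp(2,\CC)=\SL(2,\CC)$ there. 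Thus $\ker\rho=\{\diag(t_{1},\dots,t_{n}):t_{i}\in\SL(2,\CC)\}$, which is precisely the $\SL(2)$-part of the statement. To finish I would compute $\operatorname{im}\rho$ from the genericity of $H^{2}$ (which forces the $\lambda_{i}$ to have general cross-ratio): three points always admit all of $S_{3}$; four points in general position admit exactly the Klein four-group $V_{4}$ of cross-ratio-preserving double transpositions; and $n\ge 5$ general points are projectively rigid. Then I would split $\rho$ by exhibiting, for each admissible $\sigma$, an explicit $P_{\sigma}\in\PGL(2n,\CC)$ that permutes the blocks and preserves $\Pi$, identifying $G$ with the semidirect product $\operatorname{im}\rho\ltimes\SL(2,\CC)^{n}$ of the statement.

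The hard part will be the determination of $\operatorname{im}\rho$ together with the splitting: proving that the admissible permutations are \emph{exactly} $S_{3}$, $V_{4}$, or trivial requires the genericity input that a general codimension-$2$ section produces a pencil with general cross-ratios and no accidental projective symmetry, and one must then realize each admissible permutation by an honest element of $\PGL(2n,\CC)$ stabilizing $\Pi$ and check that the resulting extension is split. The small-$n$ bookkeeping — the full $S_{3}$ for $n=3$ collapsing to $V_{4}$ for $n=4$ and to the trivial group for $n\ge 5$ — is where the care lies; by contrast the kernel computation and the Kronecker reduction are comparatively routine.
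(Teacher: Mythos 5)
The first thing to say is that the paper does not prove this theorem: it is quoted verbatim from Piontkowski--Van de Ven, and the only supporting material in the text is the reduction of Section~\ref{sec:preli} (Theorem~\ref{thm:autlinsecgrass} together with condition~(\ref{eq:autlin}), the description of the degenerate members of the pencil as the $n$ distinct roots of the degree-$n$ Pfaffian, and the normal form~(\ref{eq:basisc6}) for $n=3$). Your outline follows exactly this framework, and its skeleton is sound: the reduction to congruences stabilizing a general pencil of alternating forms, the Kronecker form $A_{1}=\diag(J,\dots,J)$, $A_{2}=\diag(\lambda_{1}J,\dots,\lambda_{n}J)$, the exact sequence $1\to\ker\rho\to G\to\PGL(2,\CC)$ with $\ker\rho$ the block-diagonal copy of $\SL(2,\CC)^{n}$ (modulo $\pm I$), and the identification of the image of $\rho$ with the projective symmetries of $n$ general points of $\PP^{1}$ ($S_{3}$ for $n=3$, the Klein four-group for $n=4$, trivial for $n\geq 5$) are all correct and are certainly the content of the cited proof.

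The step you defer --- realizing each admissible $\sigma$ and splitting the sequence --- is, however, exactly where the difficulty sits, and the naive version of it fails. A congruence by $P_{\sigma}\cdot\diag(t_{1},\dots,t_{n})$ sends the member of the pencil with $i$-th block $(\lambda-\mu\lambda_{i})J$ to the form whose $i$-th block is $\det(t_{i})(\lambda-\mu\lambda_{\sigma(i)})J$, so membership in the pencil imposes a linear system on the determinants $\det(t_{i})$ depending on $\sigma$ and on the configuration $\{\lambda_{i}\}$. With the paper's own normal form~(\ref{eq:basisc6}) (eigenvalues $1,0,-1$), the transposition of the first two blocks forces $(\det t_{1},\det t_{2},\det t_{3})$ proportional to $(4,1,-2)$, which cannot be rescaled into $\SL(2,\CC)^{3}$ even projectively; so the literal permutation matrices $P_{\sigma}$ composed with $\SL(2,\CC)$-blocks do \emph{not} all stabilize $\Pi$. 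To complete your argument you must either absorb these prescribed, generally unequal, block determinants into the definition of the lifts of $\sigma$, or choose the $\lambda_{i}$ in a normal form adapted to the symmetry being realized, and only then verify that the extension splits. This bookkeeping is what the quoted statement compresses, and as written your sketch cannot be closed without it; the rest (the kernel computation, the Kronecker reduction, and the determination of the stabilizer of $n$ general points in $\PGL(2,\CC)$) is fine.
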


In order to describe the orbits of the action of $\Aut(\GG(1, 5) \cap H^{2})$ on \linebreak $\GG(1, 5) \cap H^{2}$ let us explain the generality condition on $H^{2}$ assumed in the \linebreak theorem. A linear subspace $L = H^{2} = H_{1} \cap H_{2} \subset \PP(\bigwedge^{2}\CC^{2n})$ of codimension $2$, given by the intersection of two distinct hyperplanes $H_{1} = \PP(A)^{\ast}, H_{2} = \PP(B)^{\ast}$, with $A, B \in \Antisym(2n, \CC)$, is dual to the line $L^{\ast} = \PP(\lambda A - \mu B) \subset \PP(\Antisym(2n, \CC))$. The dual Grassmannian of $\GG(1, 2n - 1)$ is given by
$$\GG(1, 2n - 1)^{\ast} = \{ \PP(C) \in \PP(\Antisym(2n, \CC)) \mid \rank(C) \leq 2n - 2 \}$$
and it is an irreducible hypersurface of degree $n$. Therefore $L^{\ast}$ intersects \linebreak $\GG(1, 2n - 1)^{\ast}$ in at most $n$ points. We will say that $L = H^{2}$ is \textit{general} if $L^{\ast}$ and $\GG(1, 2n - 1)^{\ast}$ have $n$ distinct points in common, which we denote by \linebreak $\PP(\lambda_{i}A - \mu_{i}B), i = 1, \ldots, n$. The corresponding hyperplanes $H_{i} = \PP(\lambda_{i}A - \mu_{i}B)^{\ast}$, are tangent to the Grassmannian $\GG(1, 2n - 1)$ at the points $[l_{i}]$, where \linebreak $l_{i} = \PP(\ker(\lambda_{i}A - \mu_{i}B)) \subset \PP^{2n-1}$ are called \textit{exceptional lines}.

Returning to the particular case $\GG(1, 5) \cap H^{2}$, for $j = 1, 2, 3$, let $V_{j}$ be the $3$-plane in $\PP^{5}$ generated by the exceptional lines ${l_{i}}'s$ with $i \neq j$. Let $V = \cup_{i=1}^{3} V_{i}$. Up to changing coordinates we can write
\begin{equation}
\label{eq:basisc6}
A =
\left(
  \begin{array}{cccccc}
    0 & -1 & 0 & 0 & 0 & 0 \\
    1 & 0 & 0 & 0 & 0 & 0 \\
    0 & 0 & 0 & -1 & 0 & 0 \\
    0 & 0 & 1 & 0 & 0 & 0 \\
    0 & 0 & 0 & 0 & 0 & -1 \\
    0 & 0 & 0 & 0 & 1 & 0 \\
  \end{array}
\right) \quad \textrm{and} \quad B =
\left(
  \begin{array}{cccccc}
    0 & -1 & 0 & 0 & 0 & 0 \\
    1 & 0 & 0 & 0 & 0 & 0 \\
    0 & 0 & 0 & 0 & 0 & 0 \\
    0 & 0 & 0 & 0 & 0 & 0 \\
    0 & 0 & 0 & 0 & 0 & 1 \\
    0 & 0 & 0 & 0 & -1 & 0 \\
  \end{array}
\right)
\end{equation}
(see \cite{donagi} or \cite[Prop. 3.2]{piont-van-de-ven}). The exceptional lines are $l_{1} = \PP(\gen\{ e_{1}, e_{2} \})$, $l_{2} = \PP(\gen\{ e_{3}, e_{4} \})$ and $l_{3} = \PP(\gen\{ e_{5}, e_{6} \})$.

\begin{lemma}
\label{lemma:g26trsv}

The automorphism group $\Aut(\GG(1, 5) \cap H^{2}) \subset \PGL(6, \CC)$ acts transitively on $\PP^{5} \setminus V$.
\end{lemma}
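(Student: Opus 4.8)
The plan is to exploit the explicit block structure of the automorphism group recorded in Theorem \ref{thm:autg22n}. For $n = 3$ that theorem, with $\sigma$ the identity so that $P_{\sigma} = I$, shows in particular that every element of the form $\diag(t_{1}, t_{2}, t_{3})$ with $t_{1}, t_{2}, t_{3} \in \SL(2, \CC)$ lies in $\Aut(\GG(1, 5) \cap H^{2})$. Writing $W_{i} = \gen\{ e_{2i-1}, e_{2i} \}$, so that $\CC^{6} = W_{1} \oplus W_{2} \oplus W_{3}$, this subgroup is exactly a copy of $\SL(2, \CC)^{3}$ acting factorwise, with $\diag(t_{1}, t_{2}, t_{3})$ acting on $W_{i}$ through $t_{i}$. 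In the coordinates \eqref{eq:basisc6} the exceptional lines are $l_{i} = \PP(W_{i})$, and the $3$-plane $V_{j} = \gen\{ l_{i} : i \neq j \}$ equals $\PP\big( \bigoplus_{i \neq j} W_{i} \big)$.

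First I would translate the statement into linear algebra. Every $v \in \CC^{6}$ decomposes uniquely as $v = v_{1} + v_{2} + v_{3}$ with $v_{i} \in W_{i}$, and $[v] \in V_{j}$ precisely when $v_{j} = 0$. Hence $\PP^{5} \setminus V$ is the set of classes $[v]$ for which all three components $v_{1}, v_{2}, v_{3}$ are nonzero. Since the block-diagonal subgroup $\SL(2, \CC)^{3}$ preserves each $W_{i}$, it preserves this locus, and it remains only to prove transitivity on it.

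The key step is the elementary fact that $\SL(2, \CC)$ acts transitively on $\CC^{2} \setminus \{ 0 \}$: any nonzero vector can be completed to a basis realizing determinant $1$, so the $\SL(2, \CC)$-orbit of a fixed nonzero vector is all of $\CC^{2} \setminus \{ 0 \}$. Given two points $[v], [w] \in \PP^{5} \setminus V$ with components $v_{i}, w_{i} \in W_{i} \setminus \{ 0 \}$, I would choose, for each $i$, an element $t_{i} \in \SL(2, \CC)$ with $t_{i} v_{i} = w_{i}$. Then $g = \diag(t_{1}, t_{2}, t_{3}) \in \SL(2, \CC)^{3} \subset \Aut(\GG(1, 5) \cap H^{2})$ satisfies $g v = w$, so $[g v] = [w]$, which establishes transitivity.

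Unlike the later results of the paper, this lemma presents no genuine obstacle: once the automorphism group is pinned down by Theorem \ref{thm:autg22n} and the adapted coordinates \eqref{eq:basisc6} are in place, the argument reduces to the transitivity of $\SL(2, \CC)$ on the punctured plane, applied independently in each of the three blocks. The only point requiring a line of verification is the identification of $V$ with the locus where one block component vanishes, which is immediate from $l_{i} = \PP(W_{i})$ and $V_{j} = \PP\big( \bigoplus_{i \neq j} W_{i} \big)$.
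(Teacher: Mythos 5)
Your proof is correct and follows essentially the same route as the paper's: both exploit the block-diagonal subgroup $\SL(2,\CC)^{3} \subset \Aut(\GG(1,5)\cap H^{2})$ from Theorem \ref{thm:autg22n} and the transitivity of $\SL(2,\CC)$ on $\CC^{2}\setminus\{0\}$ applied in each of the three blocks. The only cosmetic difference is that the paper moves a fixed base point $(1{:}0{:}1{:}0{:}1{:}0)$ to an arbitrary $q\notin V$, whereas you map two arbitrary points to each other directly.
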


\begin{proof}
It is sufficient to prove the existence of an automorphism in $\Aut(\GG(1, 5) \cap H^{2})$ mapping $p = (1: 0: 1: 0: 1: 0)$ to a given point $q = (q_{1}: q_{2}: q_{3}: q_{4}: q_{5}: q_{6}) \in \PP^{5} \setminus V$. Any block diagonal matrix $T = \diag(t_{1}, t_{2}, t_{3}) \in \Mat(6, \CC)$ with
$$
t_{1} =
\left(
  \begin{array}{cc}
    q_{1} & a_{12} \\
    q_{2} & a_{22} \\
  \end{array}
\right),
t_{2} =
\left(
  \begin{array}{cc}
    q_{3} & a_{34} \\
    q_{4} & a_{44} \\
  \end{array}
\right),
t_{3} =
\left(
  \begin{array}{cc}
    q_{5} & a_{56} \\
    q_{6} & a_{66} \\
  \end{array}
\right)
\in \Mat(2, \CC)
$$
defines a projective transformation $\PP(T)$ of $\PP^{5}$ such that $\PP(T)(p) = q$. Since $q \not\in V$, that is, $(q_{1}, q_{2}), (q_{3}, q_{4}), (q_{5}, q_{6}) \neq (0, 0)$, we can choose the ${a_{ij}}'s$ satisfying $\det(t_{1}) = \det(t_{2}) = \det(t_{3}) = 1$. Then we have $\PP(T) \in \Aut(\GG(1, 5) \cap H^{2})$.
\end{proof}

\begin{lemma}
\label{lemma:g26trsv1}

The automorphism group $\Aut(\GG(1, 5) \cap H^{2}) \subset \PGL(6, \CC)$ acts transitively on $V \setminus \cup_{i=1}^{3} l_{i}$.
\end{lemma}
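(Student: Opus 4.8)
The plan is to mirror the proof of Lemma~\ref{lemma:g26trsv}, exploiting the explicit block structure of $\Aut(\GG(1, 5) \cap H^{2})$ furnished by Theorem~\ref{thm:autg22n} in the case $n = 3$. First I would record the intersection pattern of the three $3$-planes. In the coordinates of \eqref{eq:basisc6} one has $V_{1} = \PP(\gen\{ e_{3}, e_{4}, e_{5}, e_{6} \})$, $V_{2} = \PP(\gen\{ e_{1}, e_{2}, e_{5}, e_{6} \})$ and $V_{3} = \PP(\gen\{ e_{1}, e_{2}, e_{3}, e_{4} \})$, so that $V_{i} \cap V_{j} = l_{k}$ whenever $\{ i, j, k \} = \{ 1, 2, 3 \}$. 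Consequently, after removing the exceptional lines the three pieces $V_{j} \setminus \cup_{i} l_{i}$ are pairwise disjoint, and their union is exactly $V \setminus \cup_{i} l_{i}$. This reduces the statement to two claims: transitivity of the group inside a single piece, and the ability to move between the three pieces.

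For the first claim I would use the diagonal subgroup $\{ \diag(t_{1}, t_{2}, t_{3}) \mid t_{i} \in \SL(2, \CC) \} \subset \Aut(\GG(1, 5) \cap H^{2})$. A point of $V_{3} \setminus (l_{1} \cup l_{2})$ has the form $(a_{1} : a_{2} : a_{3} : a_{4} : 0 : 0)$ with $(a_{1}, a_{2}) \neq (0, 0)$ and $(a_{3}, a_{4}) \neq (0, 0)$. Since $\SL(2, \CC)$ acts transitively on $\CC^{2} \setminus \{ 0 \}$, given a second such point $(b_{1} : b_{2} : b_{3} : b_{4} : 0 : 0)$ I can choose $t_{1}, t_{2} \in \SL(2, \CC)$ mapping the vector $(a_{1}, a_{2})$ to $(b_{1}, b_{2})$ and the vector $(a_{3}, a_{4})$ to $(b_{3}, b_{4})$, and take $t_{3}$ to be the identity. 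Then $\PP(\diag(t_{1}, t_{2}, t_{3}))$ preserves $V_{3}$ and sends the first point to the second, so the diagonal subgroup already acts transitively on $V_{3} \setminus (l_{1} \cup l_{2})$, and by symmetry on each $V_{j} \setminus \cup_{i} l_{i}$.

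For the second claim I would invoke the block permutations $P_{\sigma}$ available for $n = 3$, where $\sigma$ ranges over all of $S(3)$. Since $P_{\sigma}$ sends the block spanned by $e_{2i-1}, e_{2i}$ to the block spanned by $e_{2\sigma(i)-1}, e_{2\sigma(i)}$, it maps $l_{i}$ to $l_{\sigma(i)}$ and hence $V_{j}$ to $V_{\sigma(j)}$; in particular the $P_{\sigma}$ act transitively on $\{ V_{1}, V_{2}, V_{3} \}$ and carry each piece $V_{j} \setminus \cup_{i} l_{i}$ onto another. Combining this with transitivity inside a single piece yields transitivity of $\Aut(\GG(1, 5) \cap H^{2})$ on $V \setminus \cup_{i} l_{i}$.

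I expect the only delicate point to be the bookkeeping of the intersection pattern $V_{i} \cap V_{j} = l_{k}$, which is what guarantees that the three pieces are genuinely disjoint and thus that transitivity within each piece together with permutability of the pieces suffices. Everything else reduces to the transitivity of $\SL(2, \CC)$ on nonzero vectors and to the explicit description of the automorphism group, both already at hand.
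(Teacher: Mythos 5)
Your proof is correct and follows essentially the same route as the paper's: reduce to a single $V_{j}$ using the block permutations $P_{\sigma}$, then apply the block-diagonal $\SL(2, \CC)^{3}$ subgroup, whose transitivity on $\CC^{2} \setminus \{0\}$ in each $2\times 2$ block handles points of $V_{j}$ off the exceptional lines. The paper merely phrases the second step as choosing the $a_{ij}$'s subject to $\det(t_{i}) = 1$ rather than explicitly invoking transitivity of $\SL(2, \CC)$ on nonzero vectors, and it leaves the disjointness bookkeeping $V_{i} \cap V_{j} = l_{k}$ implicit.
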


\begin{proof}
Let $p, q \in V \setminus \cup_{i=1}^{3} l_{i}$. By a permutation $P_{\sigma}$ on the exceptional lines we can suppose $p = (0: 0: p_{3}: p_{4}: p_{5}: p_{6}), q = (0: 0: q_{3}: q_{4}: q_{5}: q_{6}) \in V_{1} \setminus \cup_{i=1}^{3} l_{i}$. Let $T = \diag(t_{1}, t_{2}, t_{3}) \in \Mat(6, \CC)$ be a block diagonal matrix with
$$
t_{1} =
\left(
  \begin{array}{cc}
    a_{11} & a_{12} \\
    a_{21} & a_{22} \\
  \end{array}
\right),
t_{2} =
\left(
  \begin{array}{cc}
    a_{33} & a_{34} \\
    a_{43} & a_{44} \\
  \end{array}
\right),
t_{3} =
\left(
  \begin{array}{cc}
    a_{55} & a_{56} \\
    a_{65} & a_{66} \\
  \end{array}
\right)
\in \Mat(2, \CC).
$$
The induced projective transformation $\PP(T)$ satisfies $\PP(T)(p) = q$ if
$$(p_{3}a_{33} + p_{4}a_{34})e_{3} + (p_{3}a_{43} + p_{4}a_{44})e_{4} + (p_{5}a_{55} + p_{6}a_{56})e_{5} + (p_{5}a_{65} + p_{6}a_{66})e_{6} = \textstyle \sum_{i=3}^{6} q_{i}e_{i}.$$
Since $p, q \not\in \cup_{i=1}^{3} l_{i}$ we can choose the ${a_{ij}}'s$ satisfying the condition above and the three additional ones $\det(t_{1}) = \det(t_{2}) = \det(t_{3}) = 1$.
\end{proof}

\begin{lemma}
\label{lemma:g26intv}

Let $x = [l] \in \GG(1, 5) \cap H^{2}$. If the line $l$ intersects one $V_{i}$, then it intersects the other two.
\end{lemma}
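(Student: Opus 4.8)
The plan is to reduce both the condition $x=[l]\in \GG(1,5)\cap H^{2}$ and the incidence conditions $l\cap V_{i}\neq\emptyset$ to statements about the three $2\times 2$ Pl\"ucker minors of $l$ attached to the coordinate pairs $\{1,2\}$, $\{3,4\}$, $\{5,6\}$, and then to read off the conclusion from the defining equations of $Y$.

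First I would write $l=\PP(\gen\{p,q\})$ with $p=(p_{1},\dots,p_{6})$, $q=(q_{1},\dots,q_{6})$, and set $[ij]=p_{i}q_{j}-p_{j}q_{i}$. Using the explicit matrices $A,B$ of \eqref{eq:basisc6} together with the criterion that $[\PP(\gen\{p,q\})]$ lies in $\PP(C)^{\ast}$ if and only if $p\,C\,{}^{t}q=0$, a direct computation gives
\[
p\,A\,{}^{t}q=-\bigl([12]+[34]+[56]\bigr),\qquad p\,B\,{}^{t}q=-[12]+[56].
\]
Hence membership $x\in Y=\PP(A)^{\ast}\cap\PP(B)^{\ast}$ is equivalent to the two linear relations $[12]+[34]+[56]=0$ and $[12]=[56]$, which combine to give $[56]=[12]$ and $[34]=-2[12]$.

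Next I would record the incidence criterion. Since $V_{1}=\PP(\gen\{e_{3},e_{4},e_{5},e_{6}\})$ is the locus $\{x_{1}=x_{2}=0\}$, and likewise $V_{2}=\{x_{3}=x_{4}=0\}$ and $V_{3}=\{x_{5}=x_{6}=0\}$, the line $l$ meets $V_{1}$ exactly when the system $\alpha p_{i}+\beta q_{i}=0$ $(i=1,2)$ admits a nontrivial solution $(\alpha,\beta)$, i.e.\ exactly when $[12]=0$; as $p,q$ are linearly independent, any such $(\alpha,\beta)$ produces a genuine point $\alpha p+\beta q$ of $l$ lying in $V_{1}$. In the same way $l\cap V_{2}\neq\emptyset\iff[34]=0$ and $l\cap V_{3}\neq\emptyset\iff[56]=0$.

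Combining the two, the relations $[12]=[56]$ and $[34]=-2[12]$ force the three minors $[12],[34],[56]$ to vanish simultaneously or to be simultaneously nonzero. Thus if $l$ meets even one $V_{i}$ the corresponding minor is zero, whence all three minors vanish and $l$ meets the other two planes as well. I expect no serious obstacle here: the only points demanding care are getting the signs right in the two matrix products and checking that the vanishing of a single minor genuinely yields a point of $l$ in the associated coordinate $3$-plane, both of which are routine.
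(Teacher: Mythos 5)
Your argument is correct, and it rests on the same computational core as the paper's proof --- expanding $pA\,{}^{t}q$ and $pB\,{}^{t}q$ via the block form \eqref{eq:basisc6} so that the two equations cutting out $Y$ become linear relations among the three block minors $[12]$, $[34]$, $[56]$ --- but you package it more symmetrically. The paper first reduces, by a permutation of the exceptional lines, to the case $l\cap V_{1}\neq\emptyset$, takes the intersection point as one of the two spanning vectors of $l$ (so that $[12]=0$ automatically), deduces $[34]=[56]=0$ from membership in $Y$, and then exhibits explicit points of $l$ in $V_{2}$ and $V_{3}$ by listing the vectors $q_{j}p-p_{j}q$, whose coordinates are the remaining minors. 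You instead observe once and for all that $l$ meets $V_{i}$ exactly when the corresponding minor vanishes (a nontrivial solution of the $2\times 2$ homogeneous system, which gives a genuine point of $l$ since $p,q$ are independent), and that the equations of $Y$ force $[12]:[34]:[56]=1:(-2):1$, so the three minors vanish simultaneously. This avoids both the case reduction and the explicit point construction, and as a bonus records the exact proportionality of the three minors, which is slightly more information than the lemma asserts; the signs in your two matrix products check out against \eqref{eq:basisc6}.
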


\begin{proof}
By a permutation $P_{\sigma}$ on the exceptional lines we can suppose that $l \cap V_{1} \neq \emptyset$. Then $l = \PP(\gen\{ p, q \})$, where $p = (0, 0, p_{3}, p_{4}, p_{5}, p_{6})$ and $q = (q_{1}, q_{2}, q_{3}, q_{4}, q_{5}, q_{6})$, with $p_{5}q_{6} - p_{6}q_{5} = 0$ and $p_{3}q_{4} - p_{4}q_{3} = 0$. Denote by $M_{ij}$, $1 \leq i < j \leq 6$, the $(i, j)$-minor of the matrix
$$
\left(
  \begin{array}{cccccc}
    0 & 0 & p_{3} & p_{4} & p_{5} & p_{6} \\
    q_{1} & q_{2} & q_{3} & q_{4} & q_{5} & q_{6} \\
  \end{array}
\right).
$$
Note that $M_{34} = M_{56} = 0$. We have
\begin{equation*}
\begin{split}
q_{3}p - p_{3}q & = (-p_{3}q_{1}, -p_{3}q_{2}, q_{3}p_{4} - p_{4}q_{3}, q_{3}p_{4} - p_{3}q_{4}, q_{3}p_{6} - p_{3}q_{5}, q_{3}p_{6} - p_{3}q_{6}) \\
                & = (M_{13}, M_{23}, 0, 0, -M_{35}, -M_{36}) \\
q_{4}p - p_{4}q & = (-p_{4}q_{1}, -p_{4}q_{2}, q_{4}p_{3} - p_{4}q_{3}, q_{4}p_{4} - p_{4}q_{4}, q_{4}p_{5} - p_{4}q_{5}, q_{4}p_{6} - p_{4}q_{6}) \\
                & = (M_{14}, M_{24}, 0, 0, -M_{45}, -M_{46}) \\
q_{5}p - p_{5}q & = (-p_{5}q_{1}, -p_{5}q_{2}, q_{5}p_{3} - p_{5}q_{3}, q_{5}p_{4} - p_{5}q_{4}, q_{5}p_{5} - p_{5}q_{5}, q_{5}p_{6} - p_{5}q_{6}) \\
                & = (M_{15}, M_{25}, M_{36}, M_{46}, 0, 0) \\
q_{6}p - p_{6}q & = (-p_{6}q_{1}, -p_{6}q_{2}, q_{6}p_{3} - p_{6}q_{3}, q_{6}p_{4} - p_{6}q_{4}, q_{6}p_{5} - p_{6}q_{5}, q_{6}p_{6} - p_{6}q_{6}) \\
                & = (M_{16}, M_{26}, M_{35}, M_{45}, 0, 0).
\end{split}
\end{equation*}
Since $p$ and $q$ are linearly independents, it is clear that $l$ intersects $V_{2}$ and $V_{3}$.
\end{proof}

\begin{prop}
\label{prop:orbg26}

The action of $\Aut(\GG(1, 5) \cap H^{2})$ on $X = \GG(1, 5) \cap H^{2}$ has four orbits:

\begin{itemize}
  \item $o_{1} = \{ x = [l] \in X \mid \textrm{$l$ intersects two exceptional lines} \}$;
  \item $o_{2} = \{ x = [l] \in X \mid \textrm{$l$ intersects only one exceptional line} \}$;
  \item $o_{3} = \{ x = [l] \in X \mid \textrm{$l$ intersects $V \setminus \cup_{i=1}^{3} l_{i}$} \}$;
  \item $o_{4} = \{ x = [l] \in X \mid \textrm{$l$ does not intersect $V$} \}$.
\end{itemize}
\end{prop}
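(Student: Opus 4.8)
The plan is to translate the statement into linear algebra on the $2$-plane $W\subset\CC^6$ with $x=[\PP(W)]$, and to exploit the block decomposition adapted to the exceptional lines. Write $\CC^6=U_1\oplus U_2\oplus U_3$ with $U_i=\gen\{e_{2i-1},e_{2i}\}$, so that $l_i=\PP(U_i)$ and $V_i=\PP(\bigoplus_{j\neq i}U_j)$. For $W=\gen\{p,q\}$ put $\omega_1=M_{12}$, $\omega_2=M_{34}$, $\omega_3=M_{56}$, the three ``diagonal'' Pl\"ucker coordinates. A direct computation with the matrices $A,B$ of (\ref{eq:basisc6}) shows that the two linear equations cutting out $X$ read $\omega_1=\omega_3$ and $\omega_2=-2\omega_1$; in particular $\omega_1=0\iff\omega_2=0\iff\omega_3=0$, which is exactly Lemma \ref{lemma:g26intv}. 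I record the two basic incidences: $l$ meets $V_i$ iff $\omega_i=0$ (equivalently the projection $\pi_i\colon\CC^6\to U_i$ restricts to a non-isomorphism on $W$), and $l$ meets the exceptional line $l_i$ iff $W\cap U_i\neq0$.

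With this dictionary I first establish that $o_1,\dots,o_4$ partition $X$. Let $k$ be the number of exceptional lines met by $l$. If $k=3$ then all three intersections $W\cap U_i$ are nonzero and one checks that every Pl\"ucker coordinate $M_{ij}$ vanishes, which is impossible; hence $k\in\{0,1,2\}$. Moreover meeting $l_i$ forces $W$ to meet the two $V_j$ containing $l_i$, so $k\geq1$ implies that $l$ meets $V$. The four cases $k=2$, $k=1$, ($k=0$ and $l$ meets $V$), ($k=0$ and $l$ misses $V$) are therefore mutually exclusive and exhaustive, and they are preserved by $\Aut(X)$ since the latter permutes the $l_i$ and preserves $V$. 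It remains to match them with the stated sets: $k=2$ is $o_1$, $k=1$ is $o_2$, and when $k=0$ any point of $l\cap V$ automatically lies in $V\setminus\bigcup l_i$, so $k=0$ together with $l\cap V\neq\emptyset$ is exactly $o_3$, while $l\cap V=\emptyset$ is $o_4$. (Here one notes that a line meeting two exceptional lines is forced to lie in the corresponding $V_j$, which is why the sets are grouped by the invariant $k$ rather than by the raw condition of meeting $V\setminus\bigcup l_i$.)

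The heart of the proof is transitivity on each stratum, which I carry out by normal forms under the subgroup $\SL(2,\CC)^3\subset\Aut(X)$ acting blockwise, together with the action of $\Aut(X)$ on $\{l_1,l_2,l_3\}$ by the full symmetric group (Theorem \ref{thm:autg22n}), used to fix which exceptional line(s) are met. For $o_1$, meeting $l_2$ and $l_3$ gives $W=(W\cap U_2)\oplus(W\cap U_3)$, and $\SL(U_2)\times\SL(U_3)$ carries the two spanning vectors to $e_3,e_5$, so $W=\gen\{e_3,e_5\}$. For $o_2$, meeting only $l_1$ lets me normalize $W\cap U_1=\gen\{e_1\}$ by $\SL(U_1)$ and reduce the second generator to $w=(0,w^2,w^3)$ with $w^2,w^3\neq0$ (the conditions $\omega_i=0$ kill its $U_1$-part); then $\SL(U_2)\times\SL(U_3)$ sends it to $e_3+e_5$, giving $W=\gen\{e_1,e_3+e_5\}$. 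For $o_3$, all $\pi_i|_W$ have rank exactly $1$, so $W$ lies in $\ell_1\oplus\ell_2\oplus\ell_3$ for lines $\ell_i\subset U_i$; moving each $\ell_i$ to $\gen\{e_{2i-1}\}$ by $\SL(U_i)$ puts $W$ inside $\gen\{e_1,e_3,e_5\}$ as a plane meeting each axis trivially, and the residual diagonal torus in each $\SL(U_i)$ rescales the coordinate $x_{2i-1}$ independently, normalizing the equation of $W$ to $x_1+x_3+x_5=0$. For $o_4$, every $\pi_i|_W$ is an isomorphism, so $W$ is the graph of $(\phi_2,\phi_3)$ with $\phi_i=\pi_i\circ(\pi_1|_W)^{-1}$, and the constraints force the fixed determinants $\det\phi_2=-2$, $\det\phi_3=1$; since $\SL(U_2)\times\SL(U_3)$ acts by $(\phi_2,\phi_3)\mapsto(g_2\phi_2,g_3\phi_3)$ and this action is transitive on matrices of a fixed nonzero determinant, $W$ reduces to a single normal graph. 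As the four normal forms have pairwise distinct invariants $(k,\ \text{meeting }V)$, these are four distinct orbits.

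The step I expect to be most delicate is the transitivity for $o_4$ (the dense orbit): one must check that the bookkeeping of the three $\SL(2,\CC)$ factors, subject to the rigid determinant constraints $\det\phi_2=-2,\ \det\phi_3=1$ coming from the equations of $X$, really collapses the whole stratum to one orbit, rather than leaving a residual modulus. A secondary subtlety, needed for $o_1$ and $o_2$, is justifying that $\Aut(X)$ realizes all permutations of the exceptional lines: with the asymmetric normalization (\ref{eq:basisc6}) a $3$-cycle is induced not by a bare block permutation but by one composed with a block-diagonal element whose determinants are prescribed by the pencil $\gen\{A,B\}$, and I must confirm that such elements indeed satisfy the automorphism condition (\ref{eq:autlin}), consistently with Theorem \ref{thm:autg22n}.
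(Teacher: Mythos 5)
Your argument is correct in substance but follows a genuinely different route from the paper. The paper proves transitivity on each stratum by taking an arbitrary pair of points in the same stratum and solving, entry by entry, for a block-diagonal matrix (times a block permutation) with $\det(t_i)=1$ carrying one to the other; you instead reduce every point of a stratum to a single normal form using the structure of $W$ relative to the splitting $\CC^6=U_1\oplus U_2\oplus U_3$ (a sum of two lines for $o_1$, a plane in $\ell_1\oplus\ell_2\oplus\ell_3$ for $o_3$, the graph of $(\phi_2,\phi_3)$ with $\det\phi_2=-2$, $\det\phi_3=1$ for $o_4$). Your dictionary $\omega_i=M_{2i-1,2i}$ with the equations of $X$ reduced to $\omega_1=\omega_3$, $\omega_2=-2\omega_1$ is correct (indeed $pA\pups{t}{q}=-(\omega_1+\omega_2+\omega_3)$ and $pB\pups{t}{q}=-\omega_1+\omega_3$), recovers Lemma \ref{lemma:g26intv} for free, and makes the disjointness of the strata transparent --- you are right that the literal set $o_3$ of the statement contains $o_1$ and $o_2$, so the orbits must be read as successive differences indexed by your invariant $k$. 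The step you worry most about, transitivity on $o_4$, is actually the cleanest: left multiplication by $\SL(2,\CC)$ on the set of $2\times2$ matrices of fixed nonzero determinant is transitive, so $(\phi_2,\phi_3)$ has a unique normal form and no modulus survives.

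The one point that genuinely must be supplied is the second one you flag: realizing all of $S_3$ on the exceptional lines. It is essential (without the $3$-cycles you cannot identify the pieces of $o_1$ and $o_2$ indexed by which $l_i$ are met; the transposition $(1\ 3)$ alone does not act transitively on pairs), and with the normalization (\ref{eq:basisc6}) a bare block permutation is in general \emph{not} an automorphism: in block coordinates the pencil $\gen\{A,B\}$ is the plane $a-2b+c=0$, which the transposition of blocks $1$ and $2$ sends to $b-2a+c=0$. The repair is exactly the one you sketch: for each $\sigma$ choose $(d_1,d_2,d_3)$ so that the monomial map $(a,b,c)\mapsto(a_{\sigma^{-1}(1)}/d_1,\,a_{\sigma^{-1}(2)}/d_2,\,a_{\sigma^{-1}(3)}/d_3)$ preserves that plane (possible since all coefficients of $a-2b+c$ are nonzero, so the diagonal torus acts simply transitively on such hyperplanes), take $t_i$ with $\det t_i=d_i$, and invoke Theorem \ref{thm:autlinsecgrass} to conclude $P_\sigma\cdot\diag(t_1,t_2,t_3)\in\Aut(X)$. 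This is a two-line computation, so the gap is minor; it is worth noting that the paper's own proof applies bare $P_\sigma$'s (e.g.\ $P_{(2\ 3)}$ composed with determinant-one blocks) and thus elides the very same point, so your treatment is in this respect the more careful of the two.
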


Note that none of the exceptional lines lies in $X$ and that no line in $\PP^{5}$ intersects the three exceptional lines.

\begin{proof}
Since any automorphism permutes the exceptional lines, it is clear by the geometric description that the four kinds of lines described lie in different orbits. \\

Let $x = [l], x'=[l'] \in o_{1}$. By a permutation $P_{\sigma}$ on the exceptional lines we reduce to particular cases. The first one when there is only one exceptional line intersecting both $l$ and $l'$, say $l = \PP(\gen\{ re_{1} + se_{2}, te_{3} + ue_{4} \})$ and $l' = \PP(\gen\{ r'e_{1} + s'e_{2}, t'e_{5} + u'e_{6} \})$. Let $T = \diag(t_{1}, t_{2}, t_{3}) \in \Mat(6, \CC)$ be a block diagonal matrix with
$$
t_{1} =
\left(
  \begin{array}{cc}
    a_{11} & a_{12} \\
    a_{21} & a_{22} \\
  \end{array}
\right),
t_{2} =
\left(
  \begin{array}{cc}
    a_{35} & a_{36} \\
    a_{45} & a_{46} \\
  \end{array}
\right),
t_{3} =
\left(
  \begin{array}{cc}
    1 & 0 \\
    0 & 1 \\
  \end{array}
\right)
\in \Mat(2, \CC).
$$
The induced projective transformation $\PP(P_{(2\ 3)} \cdot T)$ satisfies $\PP(P_{(2\ 3)} \cdot T)(re_{1} + se_{2}) = r'e_{1} + s'e_{2}$ and $\PP(P_{(2\ 3)} \cdot T)(te_{3} + ue_{4}) = t'e_{5} + u'e_{6}$ if
\begin{equation*}
\begin{split}
(ra_{11} + sa_{12})e_{1} + (ra_{21} + sa_{22})e_{2} & = r'e_{1} + s'e_{2} \\
(ta_{53} + ua_{54})e_{5} + (ta_{63} + ua_{64})e_{6} & = t'e_{5} + u'e_{6}.
\end{split}
\end{equation*}
Since $(r, s), (t, u), (r', s'), (t', u') \neq (0, 0)$ we can choose the ${a_{ij}}'s$ satisfying the conditions above and the two additional ones $\det(t_{1}) = \det(t_{2}) = 1$. With these values of ${a_{ij}}'s$, $\PP(P_{(2\ 3)} \cdot T)(x) = x'$. The second case, when two exceptional lines intersect both $l$ and $l'$, can be treated similarly. \\

Similarly to the previous case, it is shown that $o_{2}$ is an orbit. \\

Let $x = [l], x'=[l'] \in o_{3}$. Since $l$ and $l'$ do not intersect any of the \linebreak exceptional lines $l_{i}$, by Lemma \ref{lemma:g26intv} and its proof we can suppose \linebreak $l = \PP(\gen\{ p = (0, 0, p_{3}, p_{4}, p_{5}, p_{6}), q = (q_{1}, q_{2}, 0, 0, q_{5}, q_{6}) \})$ with $p_{5}q_{6} - p_{6}q_{5} = 0$ and $l' = \PP( \gen\{ p' = (0, 0, p_{3}', p_{4}', p_{5}', p_{6}'), q' = (q_{1}', q_{2}', 0, 0, q_{5}', q_{6}') \})$ with $p_{5}'q_{6}' - p_{6}'q_{5}' = 0$. By Lemma \ref{lemma:g26trsv1} the group $\Aut(X)$ acts transitively on $\PP^{5} \setminus V$, so we can suppose $p = p' = (0, 0, 1, 0, 1, 0)$, $q_{6} = q_{6}' = 0$ and $q_{5} = q_{5}' = 1$. Hence $l = \PP(\gen\{ p = (0, 0, 1, 0, 1, 0), q = (q_{1}, q_{2}, 0, 0, 1, 0) \})$ and $l' = \PP(\gen\{ p' = (0, 0, 1, 0, 1, 0), q' = (q_{1}', q_{2}', 0, 0, 1, 0) \})$ with $(q_{1}: q_{2}), (q_{1}': q_{2}') \in \PP^{1}$. Let $T = \diag(t_{1}, t_{2}, t_{3}) \in \Mat(6, \CC)$ be a block diagonal matrix with
$$
t_{1} =
\left(
  \begin{array}{cc}
    a_{11} & a_{12} \\
    a_{21} & a_{22} \\
  \end{array}
\right),
t_{2} =
\left(
  \begin{array}{cc}
    a_{33} & a_{34} \\
    a_{43} & a_{44} \\
  \end{array}
\right),
t_{3} =
\left(
  \begin{array}{cc}
    a_{55} & a_{56} \\
    a_{65} & a_{66} \\
  \end{array}
\right)
\in \Mat(2, \CC).
$$
The induced projective transformation $\PP(T)$ satisfies $\PP(T)(p) = p$ and $\PP(T)(q) = q'$ if
\begin{equation*}
\begin{split}
a_{33}e_{3} + a_{43}e_{4} + a_{55}e_{5} + a_{65}e_{6} & = e_{3} + e_{5} \\
(a_{11}q_{1} + a_{12}q_{2})e_{1} + (a_{21}q_{1} + a_{22}q_{2})e_{2} + a_{55}e_{5} + a_{65}e_{6} & = q_{1}'e_{1} + q_{2}'e_{2} + e_{5}.
\end{split}
\end{equation*}
Hence we put $a_{33} = 1, a_{43} = 0, a_{55} = 1, a_{65} = 0, a_{44} = a_{66} = 1$. Since $(q_{1}, q_{2}), \linebreak (q_{1}', q_{2}') \neq (0, 0)$ we can choose the rest of the ${a_{ij}}'s$ satisfying the last equation above and the additional one $\det(t_{1}) = 1$. With these values of ${a_{ij}}'s$, $\PP(T)(x) = x'$. \\

To show that $o_{4}$ is an orbit it is sufficient to find an automorphism in $\Aut(X)$ mapping the line $l_{0} = [\PP(\gen\{ (1, 0, 1, 0, 1, 0), (1, 1, 1, -2, 1, 1) \})]$ to a given \linebreak $x = [l] \subset \PP^{5} \setminus V$. By Lemma \ref{lemma:g26trsv} the group $\Aut(X)$ acts transitively on $\PP^{5} \setminus V$, so we can suppose $l = \PP(\gen\{ (1, 0, 1, 0, 1, 0), (q_{1}, q_{2}, q_{3}, -2q_{2}, q_{5}, q_{2}) \})$. Note that $q_{2} \neq 0$, otherwise $l \cap V_{3} \neq \emptyset$. We can put $q_{2} = 1$. The block diagonal matrix $T = \diag(t_{1}, t_{2}, t_{3}) \in \Mat(6, \CC)$ with
$$
t_{1} =
\left(
  \begin{array}{cc}
    1 & q_{1} - 1 \\
    0 & 1 \\
  \end{array}
\right),
t_{2} =
\left(
  \begin{array}{cc}
    1 & (1 - q_{3})/2 \\
    0 & 1 \\
  \end{array}
\right),
t_{3} =
\left(
  \begin{array}{cc}
    1 & q_{5} - 1 \\
    0 & 1 \\
  \end{array}
\right)
\in \Mat(2, \CC)
$$
defines an automorphism $\PP(T)$ of $X$ such that $\PP(T)(x_{0}) = x$.
\end{proof}

\noindent \textbf{The Case $\GG(1, 2n) \cap H^{2}$.} Now let $L = H^{2} = H_{1} \cap H_{2} \subset \PP(\bigwedge^{2} \CC^{2n+1})$ be a linear subspace of codimension $2$ given by the intersection of two distinct hyperplanes $H_{1} = \PP(A)^{\ast}, H_{2} = \PP(B)^{\ast}$, with $ A, B \in \Antisym(2n + 1, \CC)$. As before, $L$ is dual to the line $L^{\ast} = \PP(\lambda A - \mu B) \subset \PP(\Antisym(2n + 1, \CC))$. The dual Grassmannian of $\GG(1, 2n)$ is given by
$$\GG(1, 2n)^{\ast} = \{ \PP(C) \in \PP(\Antisym(2n + 1, \CC)) \mid \rank(C) \leq 2n - 2 \}$$
and it is a subvariety of codimension $3$. We will say that $L = H^{2}$ is \textit{general} if $L^{\ast}$ does not intersect $\GG(1, 2n)^{\ast}$. Since antisymmetric matrices have even rank, the ones $\lambda A - \mu B$ corresponding to the hyperplanes $H_{(\lambda: \mu)} = \PP(\lambda A - \mu B)^{\ast}$ tangent to $\GG(1, 2n)$ have all corank $1$. The $1$-dimensional kernel $c(\lambda: \mu)$ of $\lambda A - \mu B$ as a point of $\PP^{2n}$ is called \textit{center} of $H_{(\lambda: \mu)}$. The map
$$
\begin{array}{rrcl}
c: & \PP^{1} & \longrightarrow & \PP^{2n} \\
& (\lambda: \mu) & \longmapsto & c(\lambda: \mu) = \PP(\ker(\lambda A - \mu B))
\end{array}
$$
is a parametrization of a rational normal curve $C$ of degree $n$, called \textit{center curve}; the map
$$
\begin{array}{rrcl}
h: & \PP^{1} & \longrightarrow & (\PP^{2n})^{\ast} \\
& (\lambda: \mu) & \longmapsto & h(\lambda: \mu) = \PP\left(\ker\left(
                                                         \begin{array}{c}
                                                           c(\lambda: \mu)A \\
                                                           c(\lambda: \mu)B \\
                                                         \end{array}
                                                       \right)
                                                      \right)
\end{array}
$$
is a parametrization of a rational normal curve $H$ of degree $n - 1$ in the space of hyperplanes containing the center curve.

Any automorphism $\PP(T) \in \Aut(\GG(1, 2n) \cap H^{2}) \subset \PGL(2n + 1, \CC)$ maps the center curve onto itself and also the projective space $P \cong \PP^{n}$ spanned by the center curve onto itself. Hence $\PP(T)$ induces an automorphism on the rational normal curve $H$ in the dual projective space $(\PP^{2n}/P)^{\ast}$. The group of automorphisms of $\PP^{n}$ fixing a rational normal curve of degree $n$ is isomorphic to $\PGL(2, \CC)$ (see, for example, \cite[Example 10.12]{harris}). In other words, we know to describe $\PP(T)$ when restricted to $C$ and $H$. With such a description and using (\ref{eq:autlin}) we obtain:

\begin{thm}[{\cite[Thm. 6.6]{piont-van-de-ven}}]
\label{thm:autg22n1}

The automorphism group of the intersection of $\GG(1, 2n)$ with a general linear subspace of codimension $2$ of $\PP(\bigwedge^{2} \CC^{2n+1})$ is isomorphic to the subgroup of $\PGL(2n + 1, \CC)$ that consists of the elements
$$
\left(
  \begin{array}{cc}
    \alpha I_{n} & 0 \\
    S & I_{n+1} \\
  \end{array}
\right)
\cdot
\left(
  \begin{array}{cc}
    \pups{t}{t_{n}}^{-1} & 0 \\
    0 & t_{n+1} \\
  \end{array}
\right),
$$
where $\alpha \in \CC^{\ast}$, $S \in \Mat((n+1) \times n, \CC)$ with $s_{ij} = s_{kl}$ for $i + j = k + l$, $t_{n} \in \Aut(H, \PP^{n-1})$ and $t_{n+1} \in \Aut(C, \PP^{n})$.
\end{thm}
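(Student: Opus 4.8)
Set $X = \GG(1,2n) \cap H^2$. The plan is to start from Theorem~\ref{thm:autlinsecgrass}: since here $N = 2n$ and $l = 2 \leq N - 2$, that result identifies $\Aut(X)$ with the group of $\PP(T) \in \PGL(2n+1, \CC)$ whose induced action on $\PP(\Antisym(2n+1, \CC))$ preserves the pencil $L^{\ast} = \gen\{A, B\}$, which by (\ref{eq:autlin}) means
\[ \pups{t}{T}^{-1} A T^{-1}, \quad \pups{t}{T}^{-1} B T^{-1} \in \gen\{A, B\}. \]
Each such $\PP(T)$ then induces a projective automorphism of the line $\PP^1 = \PP(\gen\{A, B\})$, giving a homomorphism $\rho \colon \Aut(X) \to \PGL(2, \CC)$. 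A short computation shows that if $\pups{t}{T}^{-1}(\lambda A - \mu B) T^{-1}$ is proportional to $\lambda' A - \mu' B$, then $T$ carries $\ker(\lambda A - \mu B)$ onto $\ker(\lambda' A - \mu' B)$; hence $T$ maps the center curve $C$ onto itself compatibly with $\rho$, and therefore preserves the $(n+1)$-dimensional subspace $W \subset \CC^{2n+1}$ with $P = \PP(W) = \langle C \rangle$. Dually, $\pups{t}{T}^{-1}$ preserves $W^{\perp}$ and the curve $H$ of hyperplanes through $C$.

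Fixing a basis adapted to a splitting $\CC^{2n+1} = W' \oplus W$, with the first $n$ coordinates spanning a complement $W'$ and the last $n+1$ spanning $W$, the invariance $T(W) = W$ forces the block-lower-triangular shape
\[ T = \left( \begin{array}{cc} T_{11} & 0 \\ T_{21} & T_{22} \end{array} \right). \]
Here $\PP(T_{22}) \in \Aut(C, \PP^n) \cong \PGL(2, \CC)$ realizes $\rho(\PP(T))$ acting on $C$, that is, as $t_{n+1}$ in the $n$-th symmetric power of a point of $\PGL(2, \CC)$; the action of $T_{11}$ on the quotient $\CC^{2n+1}/W$, transported to the dual space where $H$ lives, is the same $\PGL(2, \CC)$-element in its $(n-1)$-st symmetric power, which we call $t_n \in \Aut(H, \PP^{n-1})$. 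The dualization accounts for the transpose-inverse, so that the diagonal factor is $D = \diag(\pups{t}{t_n}^{-1}, t_{n+1})$. Writing $T = U \cdot D$ reduces the problem to describing $U \in \ker \rho$, the transformations fixing every member of the pencil up to one common scalar.

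To determine $\ker \rho$, observe that such a $U$ fixes each center, hence fixes $C$ pointwise; since $C$ spans $P \cong \PP^n$, the restriction $U_{22}$ is scalar, and we normalize it projectively to $I_{n+1}$. The quotient action must then also fix $H$ pointwise, which forces $U_{11} = \alpha I_n$ for a free parameter $\alpha \in \CC^{\ast}$, so that $U = \left( \begin{smallmatrix} \alpha I_n & 0 \\ S & I_{n+1} \end{smallmatrix} \right)$. It remains to impose (\ref{eq:autlin}) on $U$: working in a normal form for the general pencil of skew-symmetric forms on $\CC^{2n+1}$ in which $A$, $B$, $C$ and $H$ are simultaneously standard, the requirement that $\pups{t}{U}^{-1} A U^{-1}$ and $\pups{t}{U}^{-1} B U^{-1}$ remain in $\gen\{A, B\}$ becomes a linear system in the entries of $S$, whose solution is exactly the Hankel condition $s_{ij} = s_{kl}$ whenever $i + j = k + l$. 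Conversely, a direct substitution verifies that every matrix of the stated form satisfies (\ref{eq:autlin}), giving both inclusions.

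The technical heart is this last step: extracting the precise Hankel pattern on $S$, together with the exact role of $\alpha$, from (\ref{eq:autlin}) requires a normal form controlling the two skew forms and both rational normal curves at once, followed by a careful elimination. A secondary point deserving attention is the compatibility packaged into $D$, where a single element of $\PGL(2, \CC)$ acts through two different symmetric powers with a dualizing transpose-inverse on the $t_n$ block. In contrast with the case $\GG(1, 2n-1) \cap H^2$ of Theorem~\ref{thm:autg22n}, no discrete permutation factor should arise here, since the degenerate members of the pencil are organized by the single irreducible center curve $C$ rather than by a finite set of exceptional lines; confirming that no such extra symmetry survives is the final check.
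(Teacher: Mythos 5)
This theorem is imported into the paper as a quotation of Piontkowski--Van de Ven; the paper contains no proof of its own, only the paragraph preceding the statement, which sketches exactly the strategy you adopt: every $\PP(T)\in\Aut(\GG(1,2n)\cap H^{2})$ preserves the pencil $L^{\ast}=\PP(\gen\{A,B\})$, hence maps the center curve $C$ and its span $P$ onto themselves, hence acts on $C$ and on the dual curve $H$ through a single element of $\PGL(2,\CC)$, and the remaining freedom is pinned down by condition (\ref{eq:autlin}). So your route is the intended one, and your bookkeeping is consistent with what the theorem asserts: the block-lower-triangular shape from $T(W)=W$ with $\PP(W)=\langle C\rangle$, the transpose-inverse on the $t_{n}$ block coming from dualizing the quotient action where $H$ lives, and the identification of $\ker\rho$ with the scalar-plus-unipotent factor all match the normal form (\ref{eq:basisc5}) in the case $n=2$.

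As a proof, however, your text has a genuine gap precisely where you flag it: the only step that actually produces the theorem's content is asserted rather than carried out. Concretely, (i) solving (\ref{eq:autlin}) for $U=\left(\begin{smallmatrix}U_{11}&0\\ S&I_{n+1}\end{smallmatrix}\right)$ to force $U_{11}=\alpha I_{n}$ and the Hankel pattern $s_{ij}=s_{kl}$ for $i+j=k+l$ is announced as ``a linear system whose solution is exactly the Hankel condition,'' which is a restatement of the conclusion, not a derivation; (ii) the converse inclusion, and in particular the surjectivity of $\rho$ (that the diagonal symmetric-power factors $D=\diag(\pups{t}{t_{n}}^{-1},t_{n+1})$ themselves satisfy (\ref{eq:autlin})), is dispatched by ``a direct substitution verifies,'' again without the substitution; and (iii) the claim that fixing $C$ pointwise forces the induced action to fix $H$ pointwise, which is what you use to get $U_{11}=\alpha I_{n}$, rests on the same unperformed computation in an explicit simultaneous normal form for the pencil $(A,B)$. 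Until that elimination is done, neither inclusion is established. Since the paper itself treats this as a cited result, the honest options are to carry out that computation (at least for the case $n=2$ actually used later, where (\ref{eq:basisc5}) makes it finite and checkable) or to keep the statement as a citation rather than present the outline as a proof.
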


Now we specialize to the case $n = 2$. Up to changing coordinates we can write
\begin{equation}
\label{eq:basisc5}
A =
\left(
  \begin{array}{ccccc}
    0 & 0 & -1 & 0 & 0 \\
    0 & 0 & 0 & -1 & 0 \\
    1 & 0 & 0 & 0 & 0 \\
    0 & 1 & 0 & 0 & 0 \\
    0 & 0 & 0 & 0 & 0 \\
  \end{array}
\right) \quad \textrm{and} \quad B =
\left(
  \begin{array}{ccccc}
    0 & 0 & 0 & -1 & 0 \\
    0 & 0 & 0 & 0 & -1 \\
    0 & 0 & 0 & 0 & 0 \\
    1 & 0 & 0 & 0 & 0 \\
    0 & 1 & 0 & 0 & 0 \\
  \end{array}
\right)
\end{equation}
(see \cite[Prop. 6.4]{piont-van-de-ven}). The center curve is given by $c(\lambda: \mu) = (0: 0: \mu^{2}: \mu \lambda: \lambda^{2})$.

\begin{prop}[{\cite[Prop. 6.8]{piont-van-de-ven}}]
\label{prop:orbg25}

The action of $\Aut(\GG(1, 4) \cap H^{2})$ on $X = \GG(1, 4) \cap H^{2}$ has four orbits:

\begin{itemize}
  \item $o_{1} = \{ x = [l] \in X \mid \textrm{$l$ is tangent to the center conic $C$} \}$;
  \item $o_{2} = \{ x = [l] \in X \mid \textrm{$l$ is secant to the center conic $C$} \}$;
  \item $o_{3} = \{ x = [l] \in X \mid \textrm{$l$ intersects the center conic but does not lie in the plane $P$} \}$;
  \item $o_{4} = \{ x = [l] \in X \mid \textrm{$l$ does not intersect the plane $P$} \}$.
\end{itemize}
\end{prop}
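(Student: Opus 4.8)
The plan is to follow the same strategy used for $\GG(1,5)\cap H^{2}$ in Proposition \ref{prop:orbg26}: first show that the four sets in the statement are $\Aut(X)$-invariant and partition $X$, and then, for each set, produce explicit automorphisms from the list in Theorem \ref{thm:autg22n1} (with $n=2$) carrying a normal form to an arbitrary member.

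\textbf{Invariance and partition.} As observed before Theorem \ref{thm:autg22n1}, every $\PP(T)\in\Aut(X)$ maps the center conic $C$ onto itself and the plane $P=\gen\{C\}$ onto itself; indeed, in the product form of Theorem \ref{thm:autg22n1} the block $t_{3}$ acts on $P=\gen\{e_{3},e_{4},e_{5}\}$ fixing $C$. Hence the four incidence conditions — tangency to $C$, secancy to $C$ (equivalently, lying in $P$ without being tangent), meeting $C$ while not lying in $P$, and missing $P$ — are each preserved, so the four sets are unions of orbits and, being defined by mutually exclusive conditions, are disjoint. To see that they cover $X$ I would prove the analogue of Lemma \ref{lemma:g26intv}: a line $l\in X$ that meets $P$ necessarily meets $C$ (a line contained in $P$ automatically meets the conic $C\subset P$). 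Writing $l=[\PP(\gen\{p,q\})]$ with Pl\"ucker coordinates $\pi_{ij}=p_{i}q_{j}-p_{j}q_{i}$, a direct computation with the matrices of (\ref{eq:basisc5}) turns $pA\,{}^{t}q=0$ and $pB\,{}^{t}q=0$ into the linear equations $\pi_{13}+\pi_{24}=0$ and $\pi_{14}+\pi_{25}=0$, while $l$ meets $P=\{x_{1}=x_{2}=0\}$ exactly when $\pi_{12}=0$. If $l$ meets $P$ but is not contained in it, normalize $q=l\cap P=(0,0,q_{3},q_{4},q_{5})$ and $p\notin P$; the two equations become $q_{3}p_{1}+q_{4}p_{2}=0$ and $q_{4}p_{1}+q_{5}p_{2}=0$, and the existence of a nonzero $(p_{1},p_{2})$ forces the symmetric matrix with rows $(q_{3},q_{4})$ and $(q_{4},q_{5})$ to be singular, i.e. $q_{3}q_{5}=q_{4}^{2}$, which is exactly the equation of $C$ in $P$. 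Thus $q\in C$, and every line of $X$ is of one of the four types.

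\textbf{Transitivity.} For each set I would exhibit suitable group elements from Theorem \ref{thm:autg22n1}. The factor $t_{3}\in\Aut(C,P)\cong\PGL(2,\CC)$ realizes the full automorphism group of $C\cong\PP^{1}$; since a tangent line to $C$ inside $P$ is determined by its point of tangency and a secant line by an unordered pair of distinct points of $C$, the transitivity of $\PGL(2,\CC)$ on points and on pairs of points already yields transitivity on $o_{1}$ and on $o_{2}$. For $o_{3}$ and $o_{4}$ I would fix convenient normal forms — a line through a fixed point of $C$ transverse to $P$, respectively a fixed line disjoint from $P$ — and solve for the entries of an element $T=\diag$-type matrix of the stated form carrying the normal form to a given line, exactly as in the last parts of the proof of Proposition \ref{prop:orbg26}, using the scalar $\alpha$ and the unipotent block $S$ to adjust the component of $l$ transverse to $P$.

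\textbf{Main obstacle.} The delicate point is transitivity on $o_{3}$ and especially on the dense orbit $o_{4}$. The block $S\in\Mat(3\times 2,\CC)$ is constrained by the Hankel-type relations $s_{ij}=s_{kl}$ whenever $i+j=k+l$, so only a restricted family of unipotent automorphisms is available; verifying that these, together with $\alpha$, $t_{2}$ and $t_{3}$, still act transitively reduces to checking that an explicit linear system in the matrix entries is solvable, in the spirit of the final computation in the proof of Proposition \ref{prop:orbg26}. The invariance and partition steps, by contrast, are routine once the Pl\"ucker equations $\pi_{13}+\pi_{24}=0$ and $\pi_{14}+\pi_{25}=0$ of $X$ are written down.
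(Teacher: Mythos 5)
The paper itself gives no proof of this proposition: it is quoted verbatim from Piontkowski--Van de Ven \cite[Prop.~6.8]{piont-van-de-ven}, the only in-text commentary being the remark that no line of $X$ meets the plane $P$ without meeting the conic $C$. So there is no in-paper proof to compare against; the closest analogue is the paper's proof of Proposition \ref{prop:orbg26}, which you explicitly take as your model. Your outline is sound. The reduction of the equations of $X$ to $\pi_{13}+\pi_{24}=0$ and $\pi_{14}+\pi_{25}=0$ is correct for the normal form (\ref{eq:basisc5}), and your argument that a line of $X$ meeting $P$ transversally must meet it on $C$ (the symmetric matrix $\bigl(\begin{smallmatrix} q_{3} & q_{4}\\ q_{4} & q_{5}\end{smallmatrix}\bigr)$ must be singular, giving $q_{4}^{2}=q_{3}q_{5}$) is precisely the justification the paper omits for its remark. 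Together with the invariance of $C$ and $P$ under $\Aut(X)$ this gives the partition, and transitivity on $o_{1}$ and $o_{2}$ via $\Aut(C,P)\cong\PGL(2,\CC)$ acting on points, respectively unordered pairs of distinct points, of $C\cong\PP^{1}$ is fine.

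What you leave unexecuted is exactly the one place where something could in principle fail, namely whether the Hankel constraint on $S$ still leaves enough unipotent automorphisms for transitivity on $o_{3}$ and $o_{4}$; you should carry this out, and it does work. For $n=2$ the constraint leaves $S=\bigl(\begin{smallmatrix} a & b\\ b & c\\ c & d\end{smallmatrix}\bigr)$ with four free parameters, and $\bigl(\begin{smallmatrix} I_{2} & 0\\ S & I_{3}\end{smallmatrix}\bigr)$ sends $e_{1}\mapsto e_{1}+ae_{3}+be_{4}+ce_{5}$ and $e_{2}\mapsto e_{2}+be_{3}+ce_{4}+de_{5}$. A line in $o_{4}$ has $\pi_{12}\neq 0$ and can be normalized to $\PP(\gen\{(1,0,p_{3},p_{4},p_{5}),(0,1,p_{4},p_{5},q_{5})\})$ (the two equations of $X$ force $q_{3}=p_{4}$ and $q_{4}=p_{5}$), and the four remaining parameters $(p_{3},p_{4},p_{5},q_{5})$ are translated by $(a,b,c,d)$, so $o_{4}$ is one orbit. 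Similarly, after using $t_{3}$ to move the intersection point to $c(0:1)=[e_{3}]$, a line in $o_{3}$ becomes $\PP(\gen\{e_{3},(0,1,0,q_{4},q_{5})\})$ and $(q_{4},q_{5})$ is translated by $(c,d)$. With these two computations written out the proposal is complete. One small caution: in \cite{piont-van-de-ven} the blocks $t_{n}$ and $t_{n+1}$ of Theorem \ref{thm:autg22n1} are induced by one and the same element of $\PGL(2,\CC)$, a coupling the paper's statement leaves implicit; this does not affect your argument, since you only ever use the restriction of the group to $P$, which is still all of $\Aut(C,\PP^{2})$.
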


We call attention to the condition that defines the forth orbit. There are no lines in $X$ that intersects the plane $P$ but not the conic $C$. \\

\noindent \textbf{The Variety of Lines on $\GG(1, N) \cap H^{l}$.} A line contained in $\GG(k, N)$ is determined by the choice of a $k$-dimensional vector subspace $U$ and a $(k+2)$-dimensional vector subspace $V$ such that $U \subset V$. Such a line, denoted by $L_{U, V}$, is given by $L_{U, V} = \left\{ [\PP(W')] \in \GG(k, N) \mid U \subset W' \subset V \right\}$. Therefore, the variety $H_{x}$ of lines on $\GG(k, N)$ through a point $x = [\PP(W)]$ can be identified with $\PP(W) \times \PP(\CC^{N+1}/W) \cong \PP^{k} \times \PP^{N-k-1}$. Since $\Aut(\GG(k, N)) \cong \PGL(N + 1, \CC)$ for $k < (N - 1)/2$, $\GG(k, N)$ is homogeneous, and therefore $H_{x}$ does not depend on the choice of the point $x$.

For $k = 1$, we identify $H_{x}$ in the following way:
\begin{equation*}
\begin{array}{rclll}
H_{x} =
\left\{
\begin{array}{l}
\textrm{lines $\subset$ $\GG(1, N)$ passing} \\
\textrm{through $x = [\PP(W)]$}
\end{array}
\right\} &
\longleftrightarrow &
\PP(W) \times \PP(\CC^{N+1}/W) & \cong & \PP^{1} \times \PP^{N-2} \\
L_{U, V} & \longmapsto & (\PP(U), \PP(V/W)). & &
\end{array}
\end{equation*}

Let $X = \GG(1, N) \cap H^{l}$ be a linear section of codimension $l$ of $\GG(1, N)$, under the Plücker embedding. The subvariety $Z_{x} \subset H_{x}$ of lines on $X$ passing through $x \in X$ is an intersection of $l$ divisors $D_{i}$ of type $(1, 1)$ in $H_{x}$, obtained from the condition that the lines are on each hyperplane defining $H^{l}$. If $H^{l}$ is general, then by Bertini's Theorem $Z_{x}$ is a complete intersection of the $l$ general divisors $D_{i}$. If $x$ is general, then using some geometric invariant theory one can prove that $Z_{x}$ is smooth (this is true for every complex, projective and connected variety $X$; see \cite[Lemma 9]{mori-hart-conj}). For the cases treated here, $Z_{x}$ will be given explicitly. Thus, we can check the smoothness of $Z_{x}$, for $x$ general, using Jacobi's Criterion.

\section{Proof of Theorem \ref{thm:zx_g25_h2}}
\label{sec:g2.5}

In this section, we prove Theorem \ref{thm:zx_g25_h2}. To make the computations easier, we will work with the normal form (\ref{eq:basisc5}).

\begin{proof}[Proof of Theorem \ref{thm:zx_g25_h2}]
The subvariety $Z_{x}$ is the intersection of two divisors $D_{1}$ and $D_{2}$ in $H_{x} \cong \PP^{1} \times \PP^{2}$, both of type $(1, 1)$. This means that, if $[\alpha]$ and $[\beta]$ denote, respectively, the numerical classes of the pullbacks of the hyperplanes classes of $\PP^{1}$ and $\PP^{2}$ by the projections, then
\[ [Z_{x}] \equiv ([\alpha] + [\beta])^{2} \equiv 2[\alpha][\beta] + [\beta]^{2} \equiv 2[L_{1}] + [L_{2}], \]
where $[L_{1}] \equiv [\alpha][\beta]$ is the class of a line in a fiber of the first projection and $[L_{2}] \equiv [\beta]^{2}$ the class of a fiber of the second projection.

Let us write down the equations defining the subvariety $Z_{x}$. If $x = [l]$, with $l = \PP(\gen\{ p, q \})$, then any line $L_{U, V}$ on $\GG(1, 4)$ passing through $x$ is determined by two vector subspaces $U$ and $V$ given by
$$U = \gen\{ rp + sq \} \subset \gen\{ p, q \} \subset V = \gen\{ p, q, v \},$$
where $(r: s) \in \PP^{1}$ and $v \in \PP(\CC^{5}/\gen\{ p, q \})$. Such a line $L_{U, V}$ is on $X$ if and only if, every point $x' = [l'] \in L_{U, V}$, with $l' = \PP(\gen\{ rp + sq, r'p + s'q + t'v \})$, is contained in $X$, that is,
\begin{equation}
\left\{
  \begin{split}
    (rp + sq)A\pups{t}{(r'p + s'q + t'v)} & = 0 \\
    (rp + sq)B\pups{t}{(r'p + s'q + t'v)} & = 0
  \end{split}
\right. \quad \sim \quad
\left\{
  \begin{split}
    (rp + sq)A\pups{t}{v} & = 0 \\
    (rp + sq)B\pups{t}{v} & = 0.
  \end{split} \label{eq:genzg25b}
\right.
\end{equation}
The equivalence holds because since $x \in X$, we have $pA\pups{t}{q} = pB\pups{t}{q} = 0$; and since $A$ and $B$ are antisymmetric matrices, we also have $uA\pups{t}{u} = uB\pups{t}{u}$ for any $u \in \CC^{5}$. The subvariety $Z_{x}$ is defined by the equations (\ref{eq:genzg25b}). \\

Let $x = [l] \in o_{1}$, with $l$ a line tangent to the center conic $C$. As we noted in the beginning, we may work with a particular $x$. We choose $l = \PP(\gen\{ p = (0, 0, 1, 0, 0), q = (0, 0, 0, 1, 0) \})$ (recall that $C \subset P \cong \PP_{(0: 0: x_{3}: x_{4}: x_{5})}^{4}$ is given by $x_{4}^{2} - x_{3}x_{5} = 0$, so the tangent line to $C$ at $c(0: 1) = \PP(p)$ is given by $x_{5} = 0$). In this case, the system of equations (\ref{eq:genzg25b}) is
$$
\left\{
  \begin{split}
    rv_{1} + sv_{2} & = 0 \\
    sv_{1} & = 0.
  \end{split}
\right.
$$
The matrix associated to that system has only one nonzero minor, namely $M_{12} = -s^{2}$. Therefore, for $(r, s) = (1, 0)$ the subspace of solutions to that system is $4$-dimensional (containing $\gen\{ p, q \}$). Hence, there is an irreducible component of $Z_{x}$ with numerical class $[L_{1}]$ (and multiplicity 2). Clearly, the $3$-dimensional vector subspace $\gen\{ e_{3}, e_{4}, e_{5} \}$ is solution for the system for every $(r, s) \in \CC^{2}$. This means that there is an irreducible component of $Z_{x}$ with numerical class $[L_{2}]$. \\

Let $x = [l] \in o_{2}$, with $l = \PP(\gen\{ p = (0, 0, 1, 0, 0), q = (0, 0, 0, 0, 1) \})$ a secant line to the center conic $C$ through the points $c(0: 1) = \PP(p)$ and $c(1: 0) = \PP(q)$. The system of equations (\ref{eq:genzg25b}) is
$$
\left\{
  \begin{split}
    rv_{1} & = 0 \\
    sv_{2} & = 0.
  \end{split}
\right.
$$
The only nonzero minor of that system $M_{12} = rs$ says that for $(r, s) = (1, 0)$ and $(r, s) = (0, 1)$ the solution space for that system is $4$-dimensional (containing $\gen\{ p, q \}$). Therefore, there are two irreducible components of $Z_{x}$ with numerical class $[L_{1}]$. The $3$-dimensional vector subspace $\gen\{ e_{3}, e_{4}, e_{5} \}$ is the solution for the system that does not depend on $(r, s) \in \CC^{2}$. Hence, there is an irreducible component of $Z_{x}$ with numerical class $[L_{2}]$. \\

Let $x = [l] \in o_{3}$, with $l = \PP(\gen\{ p = (0, 0, 1, 0, 0), q = (q_{i}) \})$ a line intersecting the center conic $C$ at $c(0: 1) = \PP(p)$, but not contained in the plane $P$ spanned by $C$. The condition $x \in X$ implies $q_{1} = 0$, and consequently $q_{2} \neq 0$. We have the system of equations
$$
\left\{
  \begin{split}
    (r + sq_{3})v_{1} + sq_{4}v_{2} - sq_{2}v_{4} & = 0 \\
    sq_{4}v_{1} + sq_{5}v_{2} - sq_{2}v_{5} & = 0.
  \end{split}
\right.
$$
Looking at the second equation and the minor $M_{45} = s^{2}q_{2}$ we see that only for $(r, s) = (1, 0)$ the solution space for that system  is $4$-dimensional (containing $\gen\{ p, q \}$). Therefore, there is an irreducible component of $Z_{x}$ with numerical class $[L_{1}]$. Now, note that any vector $v = (v_{i}) \in \CC^{5}$ that does not depend on $(r, s)$ and is a solution for the system is contained in $\gen\{ p, q \}$, which implies that the other irreducible component of $Z_{x}$ must have numerical class $[L_{1}] + [L_{2}]$. Indeed, for $(r, s) = (1, 0)$ we get $v_{1} = 0$ and for $(r, s) = (0, 1)$ we get $v_{4} = q_{4}v_{2}/q_{2}$ and $v_{5} = q_{5}v_{2}/q_{2}$. Hence, $v = (v_{3} - v_{2}q_{3}/q_{2})p + (v_{2}/q_{2})q$. \\

%Let $x = [l] \in o_{4}$, with $\ell \PP(\gen\{ p = (1, 0, 0, 0, 0), q = (0, 1, 0, 0, 0) \})$ a line that do not intersect the plane $P$. We have the system
%\begin{equation}
%\left\{
%  \begin{split}
%    -rv_{3} - sv_{4} & = 0 \\
%    -rv_{4} - sv_{5} & = 0.
%  \end{split}
%\right. \label{eq:zxxo4g25}
%\end{equation}
%Note that the homogeneous coordinates of $v$ in $\PP(\CC^{5}/\gen\{ p, q \}) \cong \PP^{2}$ are $(v_{3}: v_{4}: v_{5})$. Consider the second projection $\pi_{2}: \PP_{(r: s)}^{1} \times \PP_{(v_{i})}^{2} \rightarrow \PP_{(v_{i})}$ restricted to $Z_{x}$. A point $(v_{i}) \in \PP^{2}$ is in the image of $\pi_{2}$ if and only if there is a point $(r: s) \in \PP^{1}$ such that $((r: s), (v_{i}))$ satisfies (\ref{eq:zxxo4g25}), or equivalently,
%$$
%\det
%\left(
%  \begin{array}{cc}
%    -v_{3} & -v_{4} \\
%    -v_{4} & -v_{5} \\
%  \end{array}
%\right) = v_{0}v_{2} - v_{1}^{2} = 0.
%$$
%This condition defines a smooth conic in $\PP^{2}$, isomorphic to $\PP^{1}$. Clearly, the restriction of $\pi_{2}$ to $Z_{x}$ is injective. Therefore $Z_{x} \cong \PP^{1}$.

Let $x \in o_{4}$. Since $x$ is general, $Z_{x}$ is smooth. By the Adjunction Formula (see \cite[Prop. II.8.20]{hartshorne}) the canonical divisor $K_{Z_{x}}$ of $Z_{x}$ is
\begin{align*}
K_{Z_{x}} & = \restr{(K_{\PP_{1} \times \PP^{2}} + D_{1} + D_{2})}{Z_{x}} \\
& = \restr{(-2[\alpha] - 3[\beta] + 2([\alpha] + [\beta])}{Z_{x}} \\
& = -[\beta] \cdot (2[\alpha][\beta] + [\beta]^{2}) \\
& = -2[P],
\end{align*}
where $[P]$ denotes the numerical class of a point. In particular, $\deg(K_{Z_{x}}) = -2$. But, as a well known consequence of Riemann-Roch Theorem, $\deg K_{Z_{x}} = 2g - 2$, where $g$ denotes the genus of $Z_{x}$. Therefore, $g = 0$, and $Z_{x} \cong \PP^{1}$.
\end{proof}

\section{Proof of Theorem \ref{thm:zx_g26_h2}}
\label{sec:g2.6}

In this section, we prove Theorem \ref{thm:zx_g26_h2}. To make computations easier, we will work with the normal form (\ref{eq:basisc6}).

\begin{proof}[Proof of Theorem \ref{thm:zx_g26_h2}]
The subvariety $Z_{x}$ is the intersection of two general divisors $D_{1}$ and $D_{2}$ in $H_{x} \cong \PP^{1} \times \PP^{3}$, both of type $(1, 1)$. Therefore, if $[\alpha]$ and $[\beta]$ denote, respectively, the numerical classes of the pullbacks of the hyperplanes classes of $\PP^{1}$ and $\PP^{3}$ by the projections, then
\[ [Z_{x}] \equiv ([\alpha] + [\beta])^{2} \equiv 2[\alpha][\beta] + [\beta]^{2} \equiv 2[P] + [L], \]
where $[P] \equiv [\alpha][\beta]$ is the class of a plane $P$ in a fiber of the first projection and $[L] \equiv [\beta]^{2}$ the class of the inverse image under the second projection of a line in $\PP^{3}$.

Let $x = [l]$, with $l = \PP(\gen\{ p, q \})$. Any line $L_{U, V}$ on $\GG(1, 5)$ passing through $x$ is determined by two vector subspaces $U$ and $V$ given by
$$U = \gen\{ rp + sq \} \subset \gen\{ p, q \} \subset V = \gen\{ p, q, v \},$$
where $(r: s) \in \PP^{1}$ and $v \in \PP(\CC^{6}/\gen\{ p, q \})$. Such a line $L_{U, V}$ is on $X$ if and only if, every point $x' = [l'] \in L_{U, V}$, with $l' = \PP(\gen\{ rp + sq, r'p + s'q + t'v \})$ is contained in $Y$, that is,
\begin{equation}
\left\{
  \begin{split}
    (rp + sq)A\pups{t}{(r'p + s'q + t'v)} & = 0 \\
    (rp + sq)B\pups{t}{(r'p + s'q + t'v)} & = 0
  \end{split}
\right. \quad \sim \quad
\left\{
  \begin{split}
    (rp + sq)A\pups{t}{v} & = 0 \\
    (rp + sq)B\pups{t}{v} & = 0.
  \end{split} \label{eq:genzg26}
\right.
\end{equation}
These are the equations defining the subvariety $Z_{x}$. \\

Let $x = [l] \in o_{1}$, with $l = \PP(\gen\{ p = (1, 0, 0, 0, 0, 0), q = (0, 0, 1, 0, 0, 0) \})$. Then, the system of equations (\ref{eq:genzg26}) becomes
$$
\left\{
  \begin{split}
    -rv_{2} - sv_{4} & = 0 \\
    -rv_{2} & = 0.
  \end{split}
\right.
$$
The only nonzero minor of that system is $M_{24} = -rs$, which says that for $(r, s) = (1, 0)$ and $(r, s) = (0, 1)$ the solution subspace for that system is $5$-dimensional (and contains $\gen\{ p, q \}$). Therefore, there are two irreducible components of $Z_{x}$ with numerical class $[P]$. Clearly the $4$-dimensional vector subspace $\gen\{ p, q, e_{5}, e_{6} \}$ is solution for the system and does not depend on $(r, s) \in \CC^{2}$. Hence, there is an irreducible component of $Z_{x}$ with numerical class $[L]$. \\

Let $x = [l] \in o_{2}$, with $l = \PP(\gen\{ p = (1, 0, 0, 0, 0, 0), q = (0, 0, 1, 0, 1, 0) \})$. Now, the system of equations (\ref{eq:genzg26}) is
$$
\left\{
  \begin{split}
    -rv_{2} - sv_{4} - sv_{6} & = 0 \\
    -rv_{2} + sv_{6} & = 0.
  \end{split}
\right.
$$
The nonzero minors of that system $M_{24} = -rs, M_{26} = -2rs$ and $M_{46} = -s^{2}$ say that $(r, s) = (1, 0)$ is the only value for which the solution subspace for that system is $5$-dimensional. Therefore, there is a unique irreducible component of $Z_{x}$ with numerical class $[P]$. Since any vector which is solution for the system and does not depend on $(r, s)$ is in the $3$-dimensional vector subspace $\gen\{ p, q, e_{5} \}$, we conclude that $Z_{x}$ does not have irreducible components with numerical class $[L]$. Therefore, the other irreducible component of $Z_{x}$ has numerical class $[P] + [L]$. \\

Let $x = [l] \in o_{3}$, with $l = \PP(\gen\{ p = (0, 0, 1, 0, 1, 0), q = (1, 0, 0, 0, 1, 0) \})$. We have the system
$$
\left\{
  \begin{split}
    -sv_{2} - rv_{4} - (r + s)v_{6} & = 0 \\
    -sv_{2} + (r + s)v_{6} & = 0
  \end{split}
\right. \quad \sim \quad
\left\{
  \begin{split}
    - rv_{4} - 2(r + s)v_{6} & = 0 \\
    -sv_{2} + (r + s)v_{6} & = 0.
  \end{split}
\right.
$$
Consider $\gen\{ p, q, e_{1}, e_{2}, e_{4}, e_{6} \}$ basis for $\CC^{6}$. Any vector $v = (v_{i}) \in \CC^{6}$ in that basis is written as $v = v_{3}p + (v_{5} - v_{3})q + (v_{1} - v_{5} + v_{3})e_{1} + v_{2}e_{2} + v_{4}e_{4} + v_{6}e_{6}$. Hence, the homogeneous coordinates of $v$ in $\PP(\CC^{6}/\gen\{ p, q \}) \cong \PP^{3}$ are $(v_{1} - v_{5} + v_{3}: v_{2}: v_{4}: v_{6}) =: (t_{i})$. With these homogeneous coordinates, $Z_{x}$ is given by the equations
\begin{equation}
\left\{
  \begin{split}
    - rt_{2} - 2(r + s)t_{3} & = 0 \\
    -st_{1} + (r + s)t_{3} & = 0.
  \end{split}
\right. \label{eq:zxxo3g26}
\end{equation}
Using Jacobi's Criterion, we can see that $Z_{x}$ is smooth. Denote by \linebreak $\pi_{2}: \PP_{(r: s)}^{1} \times \PP_{(t_{i})}^{3} \rightarrow \PP_{(t_{i})}^{3}$ the second projection. A point $(t_{i}) \in \PP^{3}$ is in the image of $Z_{x}$ under $\pi_{2}$ if and only if there is a point $(r: s) \in \PP^{1}$ such that $((r: s), (t_{i}))$ satisfies (\ref{eq:zxxo3g26}), or equivalently,
$$
\det
\left(
  \begin{array}{cc}
    -t_{2} - 2t_{3} & -2t_{3} \\
    t_{3} & t_{3} - t_{1} \\
  \end{array}
\right) = t_{1}t_{2} + 2t_{1}t_{3} - t_{2}t_{3} = 0.
$$
This is the equation of a quadric cone $Q$ with vertex $o = (1: 0: 0: 0)$. It is easy to see that the restriction of $\pi_{2}$ to $\pi_{2}^{-1}(Q \setminus \{ o \})$ is an isomorphism onto $Q \setminus \{ o \}$, and $\pi_{2}^{-1}(o) = \PP^{1} \times \{ o \} \cong \PP^{1}$. Therefore $Z_{x}$ is isomorphic to the blowup of $Q$ at the vertex $o$, or equivalently, isomorphic to the Hirzebruch surface $\FF_{2} = \PP(\Ocal_{\PP^{1}} \oplus \Ocal_{\PP^{1}}(-2))$ (see, for example, \cite[Ex. IV.18(1)]{beauville}). \\

Let $x = [l] \in o_{4}$, with $\ell = \PP(\gen\{ p = (1, 0, 1, 0, 1, 0), q = (0, 1, 0, -2, 0, 1) \})$. We have the system
$$
\left\{
  \begin{split}
    -2s(v_{3} - v_{5}) - r(v_{4} + 2v_{6}) & = 0 \\
    s(v_{1} - v_{5}) - r(v_{2} - v_{6}) & = 0.
  \end{split}
\right.
$$
Consider $\gen\{ p, q, e_{1}, e_{2}, e_{3}, e_{4} \}$ basis for $\CC^{6}$. Any vector $v = (v_{i}) \in \CC^{6}$ in that basis is written as $v = v_{5}p + v_{6}q + (v_{1} - v_{5})e_{1} + (v_{2} - v_{6})e_{2} + (v_{3} - v_{5})e_{3} + (v_{4} + 2v_{6})e_{4}$. Hence, the homogeneous coordinates of $v$ in $\PP(\CC^{6}/\gen\{ p, q \}) \cong \PP^{3}$ are $(v_{1} - v_{5}: v_{2} - v_{6}: v_{3} - v_{5}: v_{4} + 2v_{6}) =: (t_{i})$. With these homogeneous coordinates, $Z_{x}$ is given by the equations
$$
\left\{
  \begin{split}
    -2st_{2} - rt_{3} & = 0 \\
    st_{0} - rt_{1} & = 0.
  \end{split}
\right. \label{eq:zxxo4g26}
$$
Using Jacobi's Criterion, we can see that $Z_{x}$ is smooth. By Adjunction Formula and similar computations as in Section \ref{sec:g2.6} we can find the anticanonical class $-K_{Z_{x}}$ of $Z_{x}$ in $N_{1}(\PP^{1} \times \PP^{3})$; it is
\[ -K_{Z_{x}} = \restr{2 \cdot [\beta]}{Z_{x}}. \]
Now, note that there is no curve in $Z_{x}$ contracted by the second projection \linebreak $\PP_{(r: s)}^{1} \times \PP_{(t_{i})}^{3} \rightarrow \PP_{(t_{i})}^{3}$. This implies that $-K_{Z_{x}}$ is ample, with index $i_{Z_{x}} = 2$. By Kobayashi-Ochiai's Theorem (see \cite[Cor. 3.1.15]{ag-iv-fanovarieties}), $Z_{x}$ is isomorphic to a smooth quadric in $\PP^{3}$.
\end{proof}

\section{Weakly $2$-Fano Manifolds}
\label{sec:w2fano}

As an application of the results from the previous sections, we complete the classification of weakly $2$-Fano manifolds, initiated in \cite{araujo-castravet}. A smooth, complex, projective variety $X$ with second Chern character $\ch_{2}(X)$ is \textit{weakly $2$-Fano} if
$$\ch_{2}(X) \cdot [S] \geq 0,$$
for all surface $S \subset X$. Weakly $2$-Fano manifolds were introduced by de Jong and Starr in \cite{dejong-starr} and further studied by Araujo and Castravet in \cite{araujo-castravet-pol} and \cite{araujo-castravet}. This notion is related to the problem of finding sections of fibrations over surfaces, and with the notion of rational simple connectedness introduced by de Jong and Starr. In \cite{araujo-castravet}, Araujo and Castravet gave an almost complete classification of weakly $2$-Fano manifolds of dimension $n \geq 3$ and index at least $n - 2$. The only cases left open were the general linear sections $\GG(1, 4) \cap H^{2}$ and $\GG(1, 5) \cap H^{2}$ are weakly $2$-Fano or not. Here we will prove that these manifolds are not weakly \linebreak $2$-Fano. Let $X = \GG(1, N) \cap H^{2}$ be a general linear section of codimension $2$ of the Grassmannian $\GG(1, N)$, $N \geq 3$. By \cite[Prop. 31]{araujo-castravet} we have
\begin{equation}
\ch_{2}(X) = \left( \frac{N-3}{2} \right) \restr{\sigma_{2}}{X} - \left( \frac{N-3}{2} \right) \restr{\sigma_{1, 1}}{X}, \label{eq:ch2_g2n_h2}
\end{equation}
where $\sigma_{2}$ and $\sigma_{1, 1}$ are Schubert cycles, generators of the $(2N - 4)$-th graded piece of the Chow ring of $\GG(1, N)$. The strategy is to find a surface $S \subset X$ with class $\sigma_{1, 1}^{\ast}$ in $\GG(1, N)$, for $N = 4, 5$. By Duality Theorem we will have
\[ \ch_{2}(X) \cdot [S] = \ch_{2}(X) \cdot \sigma_{1, 1}^{\ast} = - \left( \frac{N-3}{2} \right) < 0, \quad \textrm{for} \ N = 4, 5. \]

\begin{cor}
The general linear sections of codimension $2$ of Grassmannians $\GG(1, 4) \cap H^{2}$ and $\GG(1, 5) \cap H^{2}$ under the Plücker embedding are not weakly $2$-Fano.
\end{cor}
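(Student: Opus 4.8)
The plan is to follow the strategy indicated just above the statement: for each of $N=4$ and $N=5$, exhibit an explicit surface $S\subset X$ whose class in $\GG(1,N)$ equals the Poincar\'e dual $\sigma_{1,1}^{\ast}$, and then evaluate $\ch_{2}(X)\cdot[S]$ by combining formula (\ref{eq:ch2_g2n_h2}) with the Duality Theorem for Schubert classes. First I would identify $\sigma_{1,1}^{\ast}$ geometrically. In $\GG(1,N)=G(2,N+1)$ the complement of the partition $(1,1)$ inside the $2\times(N-1)$ box is $(N-2,N-2)$, so $\sigma_{1,1}^{\ast}=\sigma_{N-2,N-2}$, which is exactly the class of the sub-Grassmannian $\{\,\ell:\ell\subset\Pi\,\}\cong\PP^{2}$ of all lines contained in a fixed plane $\Pi\cong\PP^{2}\subset\PP^{N}$ (the meeting condition being automatic for lines in a plane). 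Hence it suffices to produce a plane $\Pi$ \emph{every} line of which lies on $X$; the surface $S=\{\ell:\ell\subset\Pi\}$ then lies in $X$ and carries class $\sigma_{1,1}^{\ast}$.

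Next I would produce such a plane in each case, working with the normal forms (\ref{eq:basisc5}) and (\ref{eq:basisc6}). For $N=4$ I take $\Pi=P$, the plane spanned by the center conic, i.e.\ $\Pi=\PP(\gen\{e_{3},e_{4},e_{5}\})$: a one-line computation gives $pA\pups{t}{q}=pB\pups{t}{q}=0$ for all $p,q\in\gen\{e_{3},e_{4},e_{5}\}$, so every line of $P$ lies on $X$ (this family is precisely the union $o_{1}\cup o_{2}$ of the two orbits of lines lying in $P$, by Proposition \ref{prop:orbg25}). For $N=5$ the choice of $\Pi$ is the delicate point, since a \emph{generic} plane carries lines that do not lie on $Y$. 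Here I would take $\Pi=\PP(\gen\{e_{1},e_{3},e_{5}\})$, picking one direction from each exceptional line. For $p,q\in\gen\{e_{1},e_{3},e_{5}\}$ all of the Pl\"ucker minors $M_{12},M_{34},M_{56}$ vanish, so both defining forms $M_{12}+M_{34}+M_{56}$ and $M_{12}-M_{56}$ vanish identically on $\textstyle\bigwedge^{2}\gen\{e_{1},e_{3},e_{5}\}$; thus the whole plane of lines lies on $Y$. Moreover $\Pi$ is contained in none of the $V_{i}$ (each $V_{i}$ omits one of $e_{1},e_{3},e_{5}$), so $S$ is a genuine surface in $Y$.

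Finally, with $S\subset X$ of class $\sigma_{1,1}^{\ast}=\sigma_{N-2,N-2}$, the projection formula gives $\sigma_{2}|_{X}\cdot[S]=\sigma_{2}\cdot\sigma_{N-2,N-2}$ and $\sigma_{1,1}|_{X}\cdot[S]=\sigma_{1,1}\cdot\sigma_{N-2,N-2}$, computed in $\GG(1,N)$. By the Duality Theorem $\sigma_{1,1}\cdot\sigma_{N-2,N-2}=1$, while $\sigma_{2}\cdot\sigma_{N-2,N-2}=0$ because the dual of $\sigma_{2}=\sigma_{2,0}$ is $\sigma_{N-1,N-3}\neq\sigma_{N-2,N-2}$. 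Substituting into (\ref{eq:ch2_g2n_h2}) yields
\[
\ch_{2}(X)\cdot[S]=\left(\tfrac{N-3}{2}\right)\cdot 0-\left(\tfrac{N-3}{2}\right)\cdot 1=-\tfrac{N-3}{2}<0
\qquad\text{for }N=4,5,
\]
so neither $\GG(1,4)\cap H^{2}$ nor $\GG(1,5)\cap H^{2}$ is weakly $2$-Fano. The main obstacle is the middle step: establishing a plane all of whose lines lie on $X$. This is immediate for $N=4$ from the orbit picture, but for $N=5$ it fails for the generic plane and forces the specific plane $\PP(\gen\{e_{1},e_{3},e_{5}\})$, which is simultaneously isotropic for the whole pencil $\gen\{A,B\}$.
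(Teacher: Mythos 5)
Your proof is correct, and it follows the same overall strategy the paper announces just before the corollary: produce a surface $S \subset X$ whose pushforward to $\GG(1,N)$ is $\sigma_{1,1}^{\ast} = \sigma_{N-2,N-2}$, then evaluate $\ch_{2}(X)\cdot[S]$ using (\ref{eq:ch2_g2n_h2}), the projection formula and duality. Where you genuinely diverge is in the construction of $S$. The paper takes $S = e_{x}(\pi_{x}^{-1}(C))$ for a component $C$ of $Z_{x}$ of class $[L_{2}]$ (resp.\ $[L]$), so that the corollary appears as an application of Theorems \ref{thm:zx_g25_h2} and \ref{thm:zx_g26_h2}; you instead write down an explicit plane $\Pi\subset\PP^{N}$ isotropic for the whole pencil $\gen\{A,B\}$ and set $S=\{[\ell]:\ell\subset\Pi\}\cong\PP^{2}$. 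Your verifications are right: for $N=4$ the plane $P=\PP(\gen\{e_{3},e_{4},e_{5}\})$ works (and gives literally the same surface as the paper's, namely the lines in the plane of the center conic), and for $N=5$ the minors $M_{12},M_{34},M_{56}$ all vanish on lines of $\PP(\gen\{e_{1},e_{3},e_{5}\})$, so both defining forms vanish there. In fact for $N=5$ your direct construction is the cleaner one: the paper's component $C$ of class $[L]$ is two-dimensional ($\PP^{1}\times\PP^{1}\subset\PP^{1}\times\PP^{3}$), and the union of the lines it parametrizes is the \emph{threefold} of lines in $\PP(\gen\{e_{1},e_{3},e_{5},e_{6}\})$ meeting $\PP(\gen\{e_{1},e_{3}\})$; to land on the $\sigma_{3,3}$ surface one must restrict to a single fiber $\PP^{1}\times\{[v_{0}]\}$ of $C$, whose swept-out locus is exactly your $\{[\ell]:\ell\subset\PP(\gen\{e_{1},e_{3},v_{0}\})\}$. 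So your argument is self-contained (it never needs the two main theorems) and sidesteps that imprecision, at the cost of not illustrating the structure of $Z_{x}$, which is the paper's purpose in this section.
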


\begin{proof}
Let $X = \GG(1, 4) \cap H^{2}$. Take $x = [l] \in X$, with $l = \PP(W)$, in the first or second orbit of the action of $\Aut(X)$ on $X$ (see Proposition \ref{prop:orbg25}). The variety $Z_{x}$ of lines passing through $x$ and contained in $X$ has an irreducible component $C$ with numerical class $[L_{2}]$ in $\PP(W) \times \PP(\CC^{5}/W)$ by Theorem \ref{thm:zx_g25_h2}. Consider the universal family morphisms
\begin{equation}
\xymatrix{
\Ucal_{x} \ar[d]^{\pi_{x}} \ar[r]^{e_{x}} & X \\
Z_{x}
}. \label{diag:fam_mor_lines_x}
\end{equation}
The surface $S$ defined by $S = e_{x}(\pi_{x}^{-1}(C)) \subset X$ has Schubert class $\sigma_{1, 1}^{\ast} = \sigma_{2, 2}$ in $\GG(1, 4)$, and $\ch_{2}(X) \cdot [S] = - \frac{1}{2} < 0$. Therefore, $X = \GG(1, 4) \cap H^{2}$ is not weakly $2$-Fano. \\

Now let $Y = \GG(1, 5) \cap H^{2}$. Take $x = [l] \in Y$, with $l = \PP(W)$, in the first orbit of the action of $\Aut(Y)$ on $Y$ (see Proposition \ref{prop:orbg26}). The variety $Z_{x}$ of lines passing through $x$ and contained in $Y$ has an irreducible component $C$ with numerical class $[L]$ in $\PP(W) \times \PP(\CC^{6}/W)$ by Theorem \ref{thm:zx_g26_h2}. Consider the universal family morphisms analogue to (\ref{diag:fam_mor_lines_x}). The surface $S$ defined by $S = e_{x}(\pi_{x}^{-1}(C)) \subset Y$ has Schubert class $\sigma_{1, 1}^{\ast} = \sigma_{3, 3}$ in $\GG(1, 5)$, and $\ch_{2}(X) \cdot [S] = - 1 < 0$.
Therefore, $Y = \GG(1, 5) \cap H^{2}$ is not weakly $2$-Fano.
\end{proof}

%% REFÊRENCIAS **************************************************************************************************************************************

\bibliographystyle{alpha}

\bibliography{biblio}
%\addcontentsline{toc}{section}{\refname}

%\nocite{*}

\end{document}